\documentclass[12pt]{amsart}
\usepackage{geometry}                
\usepackage{graphicx}
\usepackage{amssymb}
\usepackage{epstopdf}
\usepackage{amsmath}
\usepackage{color}
\usepackage{adjustbox}
\usepackage{mathtools}
\usepackage{enumitem}
\usepackage{tikz-cd}
\usepackage{hyperref}
\usepackage[dvipsnames]{xcolor}

\usepackage[all]{xy}
\xyoption{matrix}
\xyoption{arrow}

\definecolor{darkred}{RGB}{180, 0, 0}

\usetikzlibrary{arrows,decorations.pathmorphing,backgrounds,positioning,fit,petri}

 \tikzset{help lines/.style={step=#1cm,very thin, color=gray},
help lines/.default=.5} 
\tikzset{thick grid/.style={step=#1cm,thick, color=gray},
thick grid/.default=1} 

\newtheorem{thm}{Theorem}[section]
\newtheorem{lem}[thm]{Lemma}

\newtheorem{prop}[thm]{Proposition}

\theoremstyle{definition}
\newtheorem{defn}[thm]{Definition}

\theoremstyle{remark}
\newtheorem{rem}[thm]{Remark}

\numberwithin{equation}{section}

\newcommand{\noans}[1]{}

\DeclareMathOperator{\Hom}{Hom}%
\DeclareMathOperator{\Ext}{Ext}%
\newcommand{\kk}{\ensuremath{\Bbbk}}

\newcommand{\commentout}[1]{}

\newcommand{\cD}{\ensuremath{{\mathcal{D}}}}

\newcommand{\cH}{\ensuremath{{\mathcal{H}}}}

\newcommand{\cK}{\ensuremath{{\mathcal{K}}}}

\newcommand{\cS}{\ensuremath{{\mathcal{S}}}}

\definecolor{jg}{RGB}{150,50,0}

\definecolor{rm}{RGB}{0,0,200}

\definecolor{ab}{RGB}{100,150,200}

\title{Enumerating iterated tilted algebras in type $A$}

\author{Alexander E. Black}
\address{Department of Mathematics, Bowdoin College, 8600 College Station Brunswick, Maine 04011, United States of America}
\email{a.black@bowdoin.edu}
\author{Jonathan E. Gordon}
\address{Department of Mathematics, Bowdoin College, 8600 College Station Brunswick, Maine 04011, United States of America}\email{jonathan.gordon@duke.edu}
\author{Ray Maresca}
\address{Department of Mathematics, Bowdoin College, 8600 College Station Brunswick, Maine 04011, United States of America}
\email{r.maresca@bowdoin.edu}

\subjclass[2020]{
16G20, 05E10 
}

\keywords{Iterated tilted algebras, exceptional collections, trees up to rotation, Hom-Ext quiver}

\begin{document}

\begin{abstract}
 We show that isoclasses of iterated tilted algebras in type $A_n$ are in bijection with non-crossing spanning trees up to rotation on a convex $n+1$-gon. This is done by constructing a relationship between iterated tilted algebras up to isomorphism and exceptional sets up to isomorphic Hom-Ext quiver. 
\end{abstract}

\maketitle

\tableofcontents

\section*{Introduction}\label{sec0}
Tilting theory is a crucial topic in representation theory with its origins extending back to the late 1970's. The concept of iterated (generalized) tilted algebras was introduced by Assem and Happel in \cite{assem1981generalized}. Shortly after their paper, several classification results for iterated tilted algebras in different types arose in the literature \cite{assem1983iterated, assem1987iterated, happel1987iterated}. Moreover, the classification result of Assem and Happel led to the definition of gentle algebras and string algebras, which is now a highly active area of research. In that work, they classified the iterated tilted algebras in type $A_n$ using a combinatorial description in terms of `gentle tree quivers' on $n$ vertices. To our knowledge, there does not exist a count for how many gentle tree quivers there are on $n$ vertices in the literature. Our goal in this paper is to provide such a count as well as multiple combinatorial and algebraic interpretations. To do so, we construct bijections using the framework in Figure \ref{fig: the bijection}.

\begin{figure}[h]
\begin{center}
\begin{adjustbox}{width=\textwidth}
\begin{tikzcd}[cramped, column sep=small]
	{\frac{\text{Graded Gentle Trees}}{\text{Isomorphic Ungraded Gentle Trees } }} & {\frac{\text{Gentle Trees}}{\text{Isomorphism}}} & {\frac{\text{Iterated Tilted Algebras}}{\text{Isomorphism}}} \\
	{\frac{\text{Ext Quiver of Hearts}}{\text{shift-equivalent hearts}}} & {\frac{\text{Exceptional Collections}}{\text{Isomorphism of Hom-Ext Quiver}}} & {\frac{\text{Non-Crossing Spanning Trees}}{\text{Rotation}}}
	\arrow[r,<->, "3", from=1-1, to=1-2]
    \arrow[r,<->, "4", from=1-2, to=1-3]
    \arrow[r,<->, "2", from=1-1, to=2-1]
	\arrow[r, <->, "1", from=2-1, to=2-2]
	\arrow[r, <->, "5", from=2-2, to=2-3]
	\arrow[r, <->, "6", from=2-3, to=1-3]
    \end{tikzcd}
\end{adjustbox}
    \caption{The bijection between isoclasses of iterated tilted algebras in type $A_n$ and non-crossing spanning trees on a convex $n+1$-gon}
    \label{fig: the bijection}
\end{center}
\end{figure}

We begin by establishing bijection 1 in Figure \ref{fig: the bijection} (Lemma \ref{lem: ec and hearts}). This is a bijection between isoclasses of Ext quivers of shift-equivalent hearts in $\cD$ \textcolor{black}{and} isoclasses of Hom-Ext quivers of exceptional collections in mod-$\Lambda$. It relies on a bijection between simple-minded collections up to shift, algebraic $t$-structures up to shift, and exceptional collections of modules written down in \cite{maresca2024exceptional}. Bijection 2 (Lemma \ref{lem: hearts and graded trees}), follows quickly from Qiu's relationship (Theorem 2.11 in \cite{qiu2013ext}) between Ext quivers of hearts in $\cD := D^b(\text{mod} \Lambda)$ and associated quivers of graded gentle trees with $n$ vertices. Bijection 3 (Lemma \ref{lem: graded trees and gentle trees}) follows essentially by forgetting the grading on the graded gentle trees. 

To establish bijection 4, we first need to understand what we mean by a gentle tree. This is a quiver $Q$ with a certain 2-coloring such that each vertex has at most one arrow of each color incoming or outgoing. Proposition 2.4 in \cite{qiu2013ext}, states that $\Bbbk Q/I$ is a `gentle tree algebra' if and only if $Q$ is a gentle tree and $I$ is either $I^+$, the ideal generated by all unicolor paths of length two, or $I^-$, the ideal of all alternating color paths of length two. To construct a bijection between gentle trees up to isomorphism and iterated tilted algebras (gentle tree algebras) up to isomorphism, we first argue that given a gentle tree algebra $A$, there is a gentle tree $Q$ with a two coloring such that $A = \Bbbk Q/I^+$ (Lemma \ref{lem: canonical ideal}). Moreover, this gentle tree is unique up to isomorphism since $A$, a gentle tree algebra, is uniquely determined by $Q$ and $I$. Using this information, we establish a bijection between exceptional collections of modules up to isomorphic Hom-Ext quiver and iterated tilted algebras up to isomorphism (Theorem \ref{thm: bij bw H-E and ITA}).

In Section \ref{sec3}, we use Araya's combinatorial realization of exceptional collections in type $A_n$ and non-crossing spanning trees on a convex $n+1$-gon. In particular, we show that exceptional collections in type $A_n$ up to isomorphic Hom-Ext quiver are in bijection with non-crossing spanning trees on a convex $(n+1)$-gon up to rotation (Theorem~\ref{thm:main}), which is bijection 5 in Figure~\ref{fig: the bijection}. Combining this with the bijections from Section \ref{sec2}, we obtain Theorem~\ref{thm: bij bw ITA and trees up to rotation} which is bijection 6 in Figure \ref{fig: the bijection}. Further, in this theorem, we use the known count of non-crossing spanning trees up to rotation to obtain a formula for the number of isoclasses of iterated tilted algebras in type $A_n$. While the bijection itself is a useful piece of knowledge to have in representation theory, the relationship between iterated tilted algebras and Ext algebras of exceptional sets is quite interesting and we wonder if this type of result holds more generally for Dynkin quivers. We finish this section with a count of the number of exceptional sequences of modules in mod-$\Lambda$ where $\Lambda = \Bbbk Q$ for $Q$ a quiver of type $A_n$. We are aware that this was done for all Dynkin quivers in \cite{obaid2013number}; however here, we will provide a formula in terms of the cardinality of automorphism groups of Hom-Ext quivers, the number of linear extensions of Hom-Ext quivers, and the Coxeter number of $\Lambda$.

\section{Basic definitions and preliminaries}\label{sec1}

Throughout this paper, unless otherwise specified, let $Q$ denote a finite acyclic quiver. By mod-$\Lambda$ we denote the category of finitely generated $\Lambda:=\kk Q$ modules where $\kk$ is an algebraically closed field. Note that we assume throughout that $\Lambda$ is a hereditary algebra. By $\cD$ we mean the bounded derived category of mod-$\Lambda$, which is a triangulated category with shift functor $[1]:\cD \rightarrow \cD$. 

\subsection{Exceptional sets and combinatorial models}\label{subsec: geom models}

An indecomposable $\Lambda$-module $E$ is \textbf{exceptional} if Ext$(E,E) := $ Ext$^1(E,E) = 0$. An \textbf{exceptional sequence} is a sequence of exceptional modules $(E_1, E_2, \dots, E_k)$ such that Hom$_{\text{mod-}\Lambda}(E_i,E_j) = 0 = \text{Ext}(E_i,E_j)$ for all $i>j$. By an $\textbf{exceptional collection }   (\textbf{exceptional set})$ we mean an unordered set $E^*=\{E_1,E_2,$ $ \dots,E_k\}$ that can be ordered into an exceptional sequence in at least one way. We call an exceptional sequence (set) \textbf{complete} if $k$ is the number of simple $\Lambda$ modules. Throughout this paper, unless otherwise stated, we take all exceptional collections/sequences to be complete.

For quivers of type $A_n$, there are several combinatorial models. We will use Araya's non-crossing tree model \cite{araya2013exceptional}. Since all orientations of a type $A_n$ quiver give derived-equivalent module categories over their path algebras, it is enough to fix one orientation. We use the linear orientation
\[
Q:\quad 1 \longrightarrow 2 \longrightarrow \cdots \longrightarrow n.
\]
Let $\Lambda := \kk Q$ be its path algebra and $\cD$ the bounded derived category of mod-$\Lambda$. Since $\Lambda$ is a hereditary algebra whose underlying graph is a Dynkin diagram of type $A_n$, by Gabriel's theorem \cite{gabriel1972unzerlegbare}, the indecomposable $\Lambda$-modules are in bijection with the positive roots of the corresponding root system. These indecomposables are the \textbf{interval modules}
\[
X_{i,j}, \qquad 0 \le i < j \le n,
\]
where $X_{i,j}$ is the representation that assigns $\kk$ to each vertex $k$ with $i < k \le j$, assigns $0$ to all other vertices, and assigns the identity map to each arrow between two nonzero vertices. The derived category $\cD$, up to equivalence, does not depend on the orientation of $Q$, and every indecomposable object of $\cD$ is of the form $X_{i,j}[\ell]$ for a unique pair $0 \le i < j \le n$ and a unique integer $\ell$.

Following \cite{araya2013exceptional}, fix a regular $(n+1)$-gon with vertices labeled $0, 1, \dots, n$ counterclockwise. For $0 \le i < j \le n$, let $c(X_{i,j}) = c_{i,j}$ denote the \textbf{chord} joining vertex $i$ to vertex $j$. This assignment $X_{i,j} \mapsto c_{i,j}$ defines a bijection between the indecomposable $\Lambda$-modules and the chords of the $(n+1)$-gon. We distinguish two types of chords. A chord $c_{i,j}$ is called a \textbf{boundary chord} if its endpoints are adjacent vertices of the $(n+1)$-gon, so that $c_{i,j}$ lies on the boundary of the polygon. Otherwise $c_{i,j}$ is called an \textbf{interior chord}.

\begin{figure}[h]
\centering
\begin{tikzpicture}[scale=1, every node/.style={font=\sffamily\bfseries}]
\tikzset{
    square/.style={thick},
    redline/.style={darkred, ultra thick},
    labelnode/.style={inner sep=3pt}
}

\node[labelnode] at (-0.3, 2.3) {3};
\node[labelnode] at (2.3, 2.3) {0};
\node[labelnode] at (2.3, -0.3) {1};
\node[labelnode] at (-0.3, -0.3) {2};
\draw[square] (0,0) rectangle (2,2);
\draw[redline] (2,2) -- (0,0);
\node[redline, above left] at (1,1) {$c_{0,2}$};

\node at (5, 1) {$X_{0,2} \;=\;  \kk \longrightarrow \kk \longrightarrow 0$};

\end{tikzpicture}
\caption{An indecomposable interval module $X_{0,2}$ and its associated chord $c_{0,2}$ in the $4$-gon.}
\label{fig: interval module}
\end{figure}

Under this bijection, exceptional pairs admit the following geometric characterization \cite[Lemma~3.2]{araya2013exceptional}: for indecomposable modules $X = X_{i,j}$ and $Y = X_{i',j'}$ with associated chords $c(X)$ and $c(Y)$,
\begin{enumerate}[label=(\arabic*)]
    \item if $c(X)$ and $c(Y)$ are disjoint, then both $(X,Y)$ and $(Y,X)$ are exceptional pairs;
    \item if $c(X)$ and $c(Y)$ cross in the interior of the polygon, then neither $(X,Y)$ nor $(Y,X)$ is an exceptional pair;
    \item if $c(X)$ and $c(Y)$ share exactly one endpoint, then exactly one of $(X,Y)$, $(Y,X)$ is an exceptional pair.
\end{enumerate}

\begin{figure}[h]
\centering
\begin{tikzpicture}[scale=1, every node/.style={font=\sffamily\bfseries}]
\tikzset{
    square/.style={thick},
    redline/.style={darkred, ultra thick},
    labelnode/.style={inner sep=3pt}
}

\begin{scope}[shift={(0,0)}]
    \draw[square] (0,0) rectangle (2,2);
    \node[labelnode] at (-0.3, 2.3) {3};
    \node[labelnode] at (2.3, 2.3) {0};
    \node[labelnode] at (2.3, -0.3) {1};
    \node[labelnode] at (-0.3, -0.3) {2};
    \draw[redline] (0,2) to[bend left=25] node[right] {$X$} (0,0);
    \draw[redline] (2,0) to[bend left=25] node[left] {$Y$} (2,2);
    \node[below=0.5cm, align=center, font=\sffamily] at (1,0) {Disjoint: \\ both $(X,Y)$ and $(Y,X)$ \\ are exceptional pairs.};
\end{scope}

\begin{scope}[shift={(5,0)}]
    \draw[square] (0,0) rectangle (2,2);
    \node[labelnode] at (-0.3, 2.3) {3};
    \node[labelnode] at (2.3, 2.3) {0};
    \node[labelnode] at (2.3, -0.3) {1};
    \node[labelnode] at (-0.3, -0.3) {2};
    \draw[redline] (0,2) -- node[above right, pos=0.28] {$X$} (2,0);
    \draw[redline] (2,2) -- node[below right, pos=0.72] {$Y$} (0,0);
    \node[below=0.5cm, align=center, font=\sffamily] at (1,0) {Interior crossing: \\ neither $(X,Y)$ nor $(Y,X)$ \\ is an exceptional pair.};
\end{scope}

\begin{scope}[shift={(10,0)}]
    \draw[square] (0,0) rectangle (2,2);
    \node[labelnode] at (-0.3, 2.3) {3};
    \node[labelnode] at (2.3, 2.3) {0};
    \node[labelnode] at (2.3, -0.3) {1};
    \node[labelnode] at (-0.3, -0.3) {2};
    \draw[redline] (0,2) -- node[below left, pos=0.4] {$X$} (2,0);
    \draw[redline] (2,0) to[bend left=25] node[left] {$Y$} (2,2);
    \node[below=0.5cm, align=center, font=\sffamily] at (1,0) {Shared endpoint: \\ exactly one of $(X,Y)$, $(Y,X)$ \\ is an exceptional pair.};
\end{scope}

\end{tikzpicture}
\caption{The three configurations of chords governing exceptional pairs.}
\label{fig: three configurations}
\end{figure}

A \textbf{non-crossing spanning tree} of the $(n+1)$-gon is a set of $n$ chords that form a spanning tree of the vertex set $\{0,1,\dots,n\}$ and that pairwise do not cross in the interior of the polygon. Extending the bijection above to complete exceptional collections, we have the following result from \cite{araya2013exceptional}.

\begin{thm}[\cite{araya2013exceptional}, Theorem 2.5]
    A set of modules $\chi = \{E_1, \dots, E_n\}$ is a complete exceptional collection in mod-$\Lambda$ if and only if $\{c(E_1), \dots, c(E_n)\}$ is a non-crossing spanning tree of the $(n+1)$-gon.
    \label{thm: araya}
\end{thm}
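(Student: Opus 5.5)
We prove both implications using only the pairwise chord criterion \cite[Lemma~3.2]{araya2013exceptional} recalled above, together with some elementary combinatorics of chords in a convex polygon. Throughout, $\chi=\{E_1,\dots,E_n\}$ consists of $n$ distinct indecomposables. Each $E_i=X_{a,b}$ is exceptional, since interval modules of a Dynkin quiver of type $A$ are bricks without self-extensions. So the only issue is whether $\chi$ admits an ordering into an exceptional sequence. Since $\Lambda$ is hereditary and representation-directed, a set is an exceptional collection exactly when its elements are pairwise compatible and the relation ``$(X,Y)$ is not an exceptional pair'' can be linearly extended. Concretely, we must find an order $E_{\sigma(1)},\dots,E_{\sigma(n)}$ in which every pair $(E_{\sigma(i)},E_{\sigma(j)})$ with $i<j$ is exceptional.

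\emph{($\Rightarrow$)} Suppose $\chi$ is a complete exceptional collection. By case (2) of the lemma, no two chords $c(E_i)$ cross in the interior. Next we rule out cycles. Suppose the chords contained a cycle. A non-crossing cycle in a convex polygon bounds a convex sub-polygon, with vertices $v_1<\dots<v_k$ in cyclic order. Now use the explicit form of case (3): for chords sharing an endpoint, the direction of the exceptional pair is determined by the rotational order around the shared vertex. Checking $\Hom$ and $\Ext$ between interval modules $X_{i,j}$ and $X_{j,k}$, and between $X_{i,j}$ and $X_{i,k}$, one sees that the forced order always goes, say, counterclockwise around the common vertex. Following this around the boundary of the sub-polygon yields a cyclic chain $E_{i_1}\prec E_{i_2}\prec\cdots\prec E_{i_1}$ of forced orderings, which contradicts the existence of a linear order. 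Hence the $n$ chords form an acyclic graph on $n+1$ vertices, that is, a spanning tree. Note that this direction needs only that the chords are acyclic and non-crossing, and $n$ edges then force spanning.

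\emph{($\Leftarrow$)} Suppose the chords form a non-crossing spanning tree $T$. Define a relation on the edges: $e\to f$ when $e,f$ share a vertex and case (3) forces $e$ before $f$. Disjoint chords impose no constraint, by case (1). We must show that the directed graph of constraints on the edges of $T$ is acyclic; a topological sort then gives the exceptional sequence. Around each vertex $v$ of $T$, the incident edges are arranged in angular order within the convex polygon, and the constraints among them form a linear order. So the constraint graph is a union of transitive tournaments, one on each vertex star, glued along the edges. A directed cycle in it would pass through a sequence of vertex stars, and since $T$ is a tree this sequence of vertices traces a closed walk in $T$. We exclude this by induction: remove a leaf edge $e$ of $T$ whose extremal position in its star makes it a source or a sink. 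Such an edge exists because $T$ is non-crossing: take the leaf at the end of a chord of minimal length among boundary-adjacent leaves. Put $e$ first or last, as appropriate, and recurse on $T\setminus e$, which is a non-crossing spanning tree on a smaller polygon (after contracting the leaf vertex), corresponding to a perpendicular category of type $A_{n-1}$.

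\emph{Main obstacle.} The delicate point is pinning down the exact orientation rule in case (3), which the lemma as quoted does not record. I would first compute $\Hom$ and $\Ext^1$ for the four configurations of interval modules sharing an endpoint ($X_{i,j},X_{j,k}$; $X_{i,j},X_{i,k}$; $X_{i,k},X_{j,k}$) using the standard formulas for the linearly oriented $A_n$. The goal is to verify that the rule is ``clockwise order around the common vertex.'' With that rule in hand, both the no-cycle argument and the existence of a source or sink leaf follow from planarity.
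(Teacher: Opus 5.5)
The paper does not prove Theorem~\ref{thm: araya}; it imports it from Araya, so there is no internal argument to compare against. Your proposal is a correct self-contained proof once the orientation rule you flag is checked, and it does check out. For linear $A_n$ one has $\Hom(X_{i,j},X_{i',j'})\neq 0$ iff $i'\le i<j'\le j$, and $\Ext^1(X_{i,j},X_{i',j'})\neq 0$ iff $i<i'\le j<j'$. Hence for $i<j<k$ the exceptional orders are $(X_{i,j},X_{j,k})$, $(X_{i,k},X_{i,j})$ and $(X_{j,k},X_{i,k})$. With the counterclockwise labeling of Section~\ref{sec1}, each of these lists the two chords in \emph{clockwise} order about the shared vertex. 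Your text says ``counterclockwise'' in one place and ``clockwise'' in another. Both directions of your argument only use that the rotational sense is the same at every vertex, but you should state it correctly.

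Compared with the citation, your route has two advantages. First, it isolates exactly the local rule the paper later builds into $F(T)$. The paper asserts that rule without computation in Proposition~\ref{prop:F-HomExt} (``with the chosen orientation of the polygon, this arrow is precisely\dots''), so your computation also fills that step. Second, your source/sink leaf is precisely a boundary leaf edge (Definition~\ref{defn: boundary leaf edge}). Its existence is Lemma~\ref{lem:boundary-leaf-exists}, and its extremality at the non-leaf endpoint is argued in the proof of Lemma~\ref{lem:degree-one-boundary-leaf}.

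A few points need tightening:
\begin{enumerate}
\item Justify that a cycle of pairwise non-crossing chords visits its vertices in the polygon's cyclic order. If some cycle edge had cycle vertices on both sides, another cycle edge would cross it.
\item Replace ``minimal length among boundary-adjacent leaves'' by a precise argument. An interior leaf chord $\{x,v\}$ has tree edges at $v$ on both sides, so it is not extremal at $v$. Moreover, the subtree on its shorter side contains a leaf of $T$ with a shorter edge. Hence a shortest leaf edge is a boundary edge $\{v,v\pm 1\}$, which is first or last at $v$.
\item Run the recursion on the purely combinatorial acyclicity claim; deleting $x$ leaves an induced subdigraph of the constraint digraph. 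Drop the perpendicular-category remark, which would otherwise require checking that the equivalence with type $A_{n-1}$ matches the chord models.
\end{enumerate}

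Two shortcuts are also available.
\begin{enumerate}
\item The line graph of a tree is a block graph whose blocks are the vertex stars. So any directed cycle in your constraint digraph would lie in a single star, where the constraints form a linear order; no induction is needed.
\item For ($\Rightarrow$), the tree property follows from $K_0$ without the orientation rule. The Euler form is unitriangular on an exceptional sequence, so the classes $[X_{i,j}]=f_j-f_i$ (with $f_m=\sum_{l\le m}[X_{l-1,l}]$ and $f_0=0$) are linearly independent. This forces the $n$ chords to form a forest, hence a spanning tree.
\end{enumerate}
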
 

\subsection{$t$-structures and the bounded derived category}

A pair of summand-closed full subcategories $(A,B)$ where $A,B \subset \cD$ is called a \textbf{$t$-structure} on $\cD$ if

\begin{enumerate}
\item Hom$_{\cD}(A,B[-1])=0$   
\item $\cD = A \star B[-1]$
\item $A[1] \subset A$
\end{enumerate}

where $A\star B$ is the set of all objects $E\in\cD$ such that there is a distinguished triangle $X \rightarrow E \rightarrow Y \rightarrow X[1]$ with $X \in A$ and $Y\in B$. We call a $t$-structure \textbf{bounded} if $$\cD = \bigcup_{i\in\mathbb{Z}} A[i] = \bigcup_{i\in\mathbb{Z}} B[i].$$ The \textbf{heart} of a $t$-structure $(A,B)$ is the full subcategory $A\cap B$. A bounded $t$-structure is \textbf{algebraic} if its heart is a length category with finitely many simple objects.

\begin{rem}
    In type $A$, the heart of any bounded $t$-structure is a length category with finitely many simples, and thus is algebraic.
    \label{rem: type A algebraic}
\end{rem}

\subsection{Simple-minded collections, exceptional collections, and hearts}

A collection of objects $S$ in $\cD$ is called a $\textbf{simple-minded collection}$ if 

\begin{enumerate}
    \item dim Hom$(X,Y) = \delta_{XY}$ for all $X,Y \in S$ where $\delta_{XY}$ is the Kronecker delta.
    \item Hom$_{\cD}(X[i],Y) = 0$ for all $i\geq1$ and $X,Y\in S$.
    \item $\cD = \text{thick}_{\cD}(S)$, in our case this is equivalent to $S$ having $n$ objects where $n$ is the number of simple $\Lambda$ modules.
\end{enumerate}

There is a beautiful correspondence between simple-minded collections and hearts of algebraic $t$-structures.

\begin{thm}[\cite{koenig2014silting}, Theorem 6.1]\label{thm: koenig-yang}
    For any finite dimensional algebra $\Lambda$ over an algebraically closed field $\kk$, the following are in bijection:
    \begin{itemize}
        \item equivalence classes of silting objects in $\cK^b(\text{proj-}\Lambda)$.
        \item equivalence classes of simple-minded collections in $\cD$.
        \item algebraic $t$-structures on $\cD$.
        \item bounded co-$t$-structures of $\cK^b(\text{proj-}\Lambda)$.
    \end{itemize}
    where two sets of objects are equivalent if they additively generate the same subcategory.
\end{thm}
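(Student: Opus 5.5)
The plan is to build the four maps explicitly, using the silting object as the base point, and to check that the maps form a commuting cycle of bijections.

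\textbf{From silting objects to $t$-structures.} Let $T$ be a silting object in $\cK^b(\text{proj-}\Lambda)$, so $\Hom(T,T[i])=0$ for $i>0$ and $\text{thick}(T)=\cK^b(\text{proj-}\Lambda)$. Set
\[
A=\{X\in\cD : \Hom(T,X[i])=0 \text{ for } i>0\},\qquad B=\{X\in\cD : \Hom(T,X[i])=0 \text{ for } i<0\}.
\]
Then $A[1]\subset A$ and the orthogonality $\Hom_{\cD}(A,B[-1])=0$ should follow formally. The existence of truncation triangles and boundedness I would get by a dévissage on the finitely many indecomposable summands of $T$: any $X\in\cD$ has only finitely many $i$ with $\Hom(T,X[i])\neq 0$, since $T$ is perfect and $X$ is bounded.

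The heart $A\cap B$ should be equivalent to mod-$\Gamma$ with $\Gamma=\text{End}(T)$, via $\Hom(T,-)$. This makes the $t$-structure algebraic, and its simples $S_1,\dots,S_n$ form a simple-minded collection. The duality $\Hom(T_i,S_j[\ell])=\delta_{ij}\delta_{\ell 0}\kk$ is the key bookkeeping identity.

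\textbf{The remaining maps.}
\begin{enumerate}
\item From a simple-minded collection $S$ to a $t$-structure: take $A$ to be the extension closure of $\{S[i] : i\ge 0\}$, and $B$ the extension closure of $\{S[i] : i\le 0\}$, up to the shift convention used here. Condition (2) of the definition of a simple-minded collection gives the Hom-vanishing.
\item From an algebraic $t$-structure to a co-$t$-structure: the heart is a length category with finitely many simples. I would produce projective-like objects $P_i\in\cK^b(\text{proj-}\Lambda)$ dual to the simples, and generate the co-$t$-structure from them.
\item From a bounded co-$t$-structure to a silting object: take an additive generator of its co-heart.
\end{enumerate}
Checking that the composites are identities reduces to showing that a silting object, a simple-minded collection, or an aisle is determined by the orthogonality relations above. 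For equivalence classes this is just additive closure.

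\textbf{Main obstacle.} The hard step is the passage from an algebraic heart (or simple-minded collection) back to a silting object. One must show that the Hom-dual objects $P_i$ to the simples actually exist and lie in $\cK^b(\text{proj-}\Lambda)$, not merely in $\cD$. I would try to construct them as homotopy limits of iterated universal extensions of the $S_j$ by their shifts, in the manner of Keller–Nicolás or Rickard's construction. Finiteness would come from boundedness of the $t$-structure and finite-dimensionality of $\Lambda$, using that $\text{thick}(S)=\cD$ forces only finitely many nonzero extension degrees.

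For the hereditary type $A$ situation of this paper, this step simplifies. Every object of $\cD$ is a sum of shifted modules, and $\cD=\cK^b(\text{proj-}\Lambda)$ since $\Lambda$ has finite global dimension. So in our application one could alternatively verify the correspondence directly. In general, though, I would follow the homotopy-limit construction.
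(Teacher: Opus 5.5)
The paper does not prove this statement: it is quoted from Koenig--Yang and used as a black box, and only the correspondence between simple-minded collections and algebraic $t$-structures is ever invoked. Your outline follows the architecture of the cited proof, so there is no competing route to compare. As you state them, however, several steps would not go through.

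First, $\Hom_{\cD}(A,B[-1])=0$ is not formal; it depends essentially on $\text{thick}(T)=\cK^b(\text{proj-}\Lambda)$. Take the presilting but non-generating $T=P$, the simple projective over $\kk A_2$. The other simple module $N$ has $\Hom(T,N[i])=0$ for all $i$, so $N\in A\cap B[-1]$, although $\Hom(N,N)\neq 0$. Likewise, a d\'evissage over the summands of $T$ will not produce truncation triangles. If $\Lambda$ has infinite global dimension, objects of $\cD$ need not lie in $\text{thick}(T)$. Even when they do, presenting $X$ as an iterated cone of shifts of summands of $T$ does not sort the pieces by degree.

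The standard repair is to work in $D(\text{Mod-}\Lambda)$ with the aisle $\mathcal{U}$ generated by the compact object $T$ (equivalently, the standard $t$-structure of the non-positive dg endomorphism algebra of $T$). Silting gives $\mathcal{U}\subseteq T^{\perp_{>0}}$. So for $X\in T^{\perp_{>0}}$ with truncation triangle $U\to X\to V\to U[1]$, one gets $\Hom(T,V[i])=0$ for every $i$, and generation forces $V=0$. Hence $\mathcal{U}=T^{\perp_{>0}}$, which gives the orthogonality. Only then does your finiteness observation finish the job: for $X\in\cD$, $\bigoplus_i\Hom(T,(\tau_{\le 0}X)[i])=\bigoplus_{i\le 0}\Hom(T,X[i])$ is finite dimensional, and since $\Lambda\in\text{thick}(T)$ this forces $\tau_{\le 0}X\in\cD$.

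Second, for a simple-minded collection, condition (2) yields the orthogonality, but the real content is $\cD=A\star B[-1]$, and this needs condition (1) as well. The two indecomposable projectives over $\kk A_2$ satisfy (2) and (3). Yet for the non-projective simple $I$ one has $\Hom(A,I[-1])=0$ while $I[-1]\notin B[-1]$ (even after closing under summands), so $I[-1]\notin A\star B[-1]$.

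Third, in the step you rightly call the hardest, showing $P_i\in\cK^b(\text{proj-}\Lambda)$ is not the end. You must also show that $\bigoplus_i P_i$ is silting, in particular that it generates $\cK^b(\text{proj-}\Lambda)$. Moreover, the criterion that $\Hom(P,L[m])=0$ for $|m|\gg 0$ and all simple modules $L$ implies $P$ is perfect applies only after you know the homotopy limit lies in $D^-(\text{mod-}\Lambda)$.

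Finally, for this paper that hardest step is not needed at all. The bijection between simple-minded collections and algebraic $t$-structures only uses three facts: the simples of a length heart form a simple-minded collection; the extension-closure pair of a simple-minded collection is a bounded $t$-structure whose heart is the extension closure of $S$; and a bounded $t$-structure is determined by its heart.
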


We note that two simple-minded collections $S_1$ and $S_2$ additively generate the same subcategory if and only if up to permutation, the objects in $S_1$ and $S_2$ are pairwise isomorphic. When studying simple-minded collections in this paper, we will fix representatives of the isoclass of each object, so we will not consider equivalence classes. Moreover, we will only need the bijection between simple-minded collections and algebraic $t$-structures in our paper, so we will not define the other terms in the bijection.

As in \cite{maresca2024exceptional}, we can insert exceptional collections into this bijection. \commentout{An object $E \in \cD$ is \textbf{exceptional} if Hom$_{\cD}(E,E[i])=0$ for all $i\in\mathbb{Z}-\{0\}$ and Hom$_{\cD}(E,E[i])\cong \kk$. A sequence of exceptional objects $(E_1, E_2, \dots, E_k)$ in $\cD$ is an \textbf{exceptional sequence} if Hom$_{\cD}(E_i,E_j[k]) = 0$ for all $i>j$ and $k\in\mathbb{Z}$. A set of exceptional objects $\{E_1, E_2, \dots, E_k\}$ in $\cD$ is an \textbf{exceptional collection } (\textbf{exceptional set}) if it can be ordered into an exceptional sequence in at least one way. An exceptional sequence (set) is \textbf{complete} if $k$ is the number of simples in mod-$\Lambda$.} Let $\cS$ be the class of simple-minded collections in $\cD$ and say that $S_1, S_2\in \cS$ are  \textbf{shift-equivalent} if and only if the unshifted objects of $S_1$ and $S_2$ coincide. We say two algebraic $t$-structures are \textbf{shift-equivalent} if and only if the simple objects in their hearts are equivalent as simple-minded collections.

\begin{thm}[\cite{maresca2024exceptional}]
    There is a bijection between
    \begin{itemize}
        \item shift-equivalence classes of simple-minded collections in $\cD$.
        \item exceptional collections in mod-$\Lambda$.
        \item shift-equivalence classes of algebraic $t$-structures on $\cD$.
        
    \end{itemize}
    \label{thm: mar24}
\end{thm}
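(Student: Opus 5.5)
The natural route is to combine Theorem~\ref{thm: koenig-yang} with the standard correspondence between simple-minded collections and exceptional collections for hereditary algebras. By Theorem~\ref{thm: koenig-yang}, algebraic $t$-structures are in bijection with simple-minded collections: send a $t$-structure to the set of simple objects of its heart. By definition, shift-equivalence of $t$-structures is shift-equivalence of their simple-minded collections. So the first and third items are in bijection, and it remains to match shift-equivalence classes of simple-minded collections with complete exceptional collections in mod-$\Lambda$.

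\textbf{From simple-minded collections to exceptional collections.} Since $\Lambda$ is hereditary, every indecomposable object of $\cD$ has the form $M[\ell]$ for an indecomposable module $M$ and a unique $\ell$. For $S=\{X_1[\ell_1],\dots,X_n[\ell_n]\}$ define $\Phi(S)=\{X_1,\dots,X_n\}$, the set of unshifted objects; this is constant on shift-equivalence classes by definition. To see that $\Phi(S)$ is exceptional, I would use that the heart $\cH$ with simples $S$ is a length category with $n$ simples and finite global dimension. The endomorphism algebra of each simple is $\kk$, and the Ext-quiver of $S$ has no oriented cycles, as for any simple-minded collection in $D^b$ of a representation-directed hereditary algebra. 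Ordering $S$ compatibly with this directedness gives $\Hom_\cD(X_i[\ell_i],X_j[\ell_j][m])=0$ for $i>j$ and all $m$. Stripping shifts preserves this vanishing, so it yields $\Hom(X_i,X_j)=0=\Ext(X_i,X_j)$ for $i>j$. Each $X_i$ is exceptional because $\Ext^1(X_i,X_i)=\Hom(X_i[\ell_i],X_i[\ell_i][1])$ vanishes, the latter being a self-extension of a simple in a directed heart.

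\textbf{From exceptional collections to simple-minded collections.} Conversely, given an exceptional collection $E^*$, order it as an exceptional sequence $(E_1,\dots,E_n)$. I would build shifts inductively, choosing $\ell_1\ll\ell_2\ll\cdots$ or the appropriate direction, so that all nonzero $\Hom$ and $\Ext^1$ between $E_i$ and $E_j$ with $i<j$ become $\Hom(X[\ell],Y[\ell'][m])$ with $m\ge 1$ only. Because the algebra is hereditary, only $m\in\{0,1\}$ relative to the unshifted modules matter, so sufficiently spread shifts kill the $m\le 0$ Hom-spaces. Thickness holds because a complete exceptional sequence generates $\cD$. Injectivity of $\Phi$ modulo shift is tautological. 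Surjectivity is this construction, and $\Phi\circ\Psi$ is the identity on sets.

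\textbf{Main obstacle.} The delicate point is the first direction: showing that the unshifted objects of an arbitrary simple-minded collection form an exceptional collection in mod-$\Lambda$. This requires producing an ordering with $\Hom_\cD(X_i,X_j[m])=0$ for all $m$ when $i>j$, not just for the shifts realized in $S$. I would first try the Koenig--Yang correspondence to silting objects, using the Hom-duality between silting summands and simples. Silting objects for hereditary algebras are shifts of partial tilting modules, whose summands form exceptional sequences. Passing this through the duality should transfer directedness to $S$. If that fails, I would fall back on the fact that the heart is derived equivalent to $\Lambda$ with directed Ext-quiver, and argue by induction via simple tilts (HRS mutation). Each mutation replaces one simple by a mutated object while preserving exceptionality of the unshifted set.
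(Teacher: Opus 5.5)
\textbf{Verdict: genuine gap.} The one nontrivial step, that the unshifted objects of an arbitrary simple-minded collection form an exceptional collection, is assumed rather than proved.

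The formal parts of your plan are fine. Theorem~\ref{thm: koenig-yang} matches the first and third items, and $\Phi$ is injective on shift-equivalence classes by definition. Your $\Psi$ also works once the direction is fixed. Nonzero $\Hom$ and $\Ext^1$ go from $E_i$ to $E_j$ with $i<j$, so you need $\ell_1>\ell_2>\cdots>\ell_n$, and spacing by $1$ suffices. The choice $\ell_1\ll\ell_2$ would produce morphisms of negative degree.

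The gap is the well-definedness of $\Phi$, i.e.\ that the unshifted objects of an \emph{arbitrary} simple-minded collection form an exceptional collection. This is the substantive content of the statement, and your argument for it is circular:
\begin{itemize}
\item ``The Ext-quiver of $S$, over all degrees, has no oriented cycles'' is equivalent to ``$S$ can be ordered into an exceptional sequence in $\cD$'', which is exactly what must be shown.
\item Likewise, vanishing of $\Hom_{\cD}(X_i[\ell_i],X_i[\ell_i][1])$ ``in a directed heart'' presupposes it; simples of a heart can have self-extensions in general.
\item Representation-directedness does not supply it either. It constrains only degree-$0$ maps, while cycles through positive degrees occur among exceptional modules, e.g.\ nonzero maps $X_{1,2}\to X_{0,2}\to X_{0,1}\to X_{1,2}[1]$ in type $A_2$. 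So acyclicity has to come from the simple-minded axioms. That appeal would also only cover Dynkin type, whereas the cited statement is for any hereditary $\Lambda$ (harmless for type $A_n$ here).
\end{itemize}
Your fallbacks are only sketched. The second also misdescribes a simple tilt: tilting at $S_i$ replaces $S_i$ by $S_i[1]$ and can change every $S_j$ with $\Ext^1(S_j,S_i)\neq 0$. It would also need connectivity of the exchange graph of hearts.

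The paper gives no proof. It cites \cite{maresca2024exceptional}, and for exactly this step it cites Lemma~5.1 of \cite{simoes2022functorially} or Lemma~2.3 of \cite{buan2012m} (proof of Lemma~\ref{lem: ec and hearts}). Your first fallback can be completed as follows.
\begin{enumerate}
\item Let $T=\bigoplus_i T_i$ be the basic silting object attached to $S$, indexed so that $\Hom_{\cD}(T_i,S_j[m])=\delta_{ij}\delta_{m0}\kk$. Write $T_i=M_i[a_i]$ with $M_i$ an indecomposable module.
\item The silting condition forces $\Hom_{\cD}(T_i,T_j[m])=0$ for all $m$ whenever $a_i>a_j$, and $\Ext^1(M_i,M_j)=0$ whenever $a_i=a_j$. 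Order the $T_i$ by increasing $a_i$, and within one degree by the acyclic endomorphism quiver of the partial tilting module $\bigoplus_{a_i=a}M_i$. This makes $(T_1,\dots,T_n)$ a full exceptional sequence in $\cD$.
\item Since $\Hom_{\cD}(T_k,S_j[m])=0$ for $k>j$, the resulting semiorthogonal decomposition puts $S_j$ in $\text{thick}(T_1,\dots,T_j)$. Hence $\Hom_{\cD}(S_i,S_j[m])=0$ for all $m$ when $i<j$.
\item Because $\Hom_{\cD}(T_j,S_j[m])=\delta_{m0}\kk$, there is a triangle $T_j\to S_j\to A\to T_j[1]$ with $A\in\text{thick}(T_1,\dots,T_{j-1})$. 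It gives $\Hom_{\cD}(S_j,S_j[m])\cong\Hom_{\cD}(T_j,S_j[m])=\delta_{m0}\kk$.
\item So $(S_n,\dots,S_1)$ is an exceptional sequence in $\cD$. Stripping shifts gives $\Hom=0=\Ext^1$ in the required direction and $\Ext^1(X_j,X_j)=0$, so $\Phi(S)$ is an exceptional collection.
\end{enumerate}
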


\subsection{Iterated tilted algebras, colored quivers, and gentle trees}

Let $\Lambda = \kk Q/I$. We say that $\Lambda$ is a \textbf{gentle tree algebra} if 
\begin{itemize}
    \item $Q$ is a tree.
    \item For any vertex $i$ in $Q$, $i$ is the source and target of at most two arrows.
    \item For each arrow $\alpha$ in $Q$, there is at most one arrow $\beta$ and one arrow $\gamma$ such that $\alpha\beta \notin I$ and $\gamma\alpha\notin I$.
    \item For each arrow $\alpha$ in $Q$, there is at most one arrow $\delta$ and one arrow $\epsilon$ such that $\alpha\delta \in I$ and $\epsilon\alpha\in I$. Moreover, $I$ is generated by these monomials.
\end{itemize}

By a \textbf{tilting module} in mod-$\Lambda$, we mean a module $M$ such that 
\begin{itemize}
    \item There is an exact sequence $0\rightarrow P'' \rightarrow P' \rightarrow M \rightarrow 0$ with $P'$ and $P''$ projective modules.
    \item Ext$^1(M,M) = 0$.
    \item There is an exact sequence $0\rightarrow \Lambda \rightarrow M' \rightarrow M'' \rightarrow 0$ with $M'$ and $M''$ direct summands of $M$.
\end{itemize}

When $\Lambda$ is hereditary, a tilting module in mod-$\Lambda$ is a basic module $M = M_1 \oplus \cdots \oplus M_n$ where $n$ is the number of simple $\Lambda$ modules and Ext$(M,M) = 0$. Given a tilting $\Lambda$ module $M$, we call $A = \text{End}(M)$ a \textbf{tilted algebra}. An algebra $B$ is an \textbf{iterated tilted algebra} of $\Lambda$ if there is a finite sequence of modules $M_1, M_2, \dots, M_k$ such that $M_1$ is a tilting $A_1 := \Lambda$ module, $M_i$ is a tilting $A_{i} := \text{End}(M_{i-1})$ module, and $B = A_{k+1}$. The following classification of iterated tilted algebras in type $A_n$ was done by Assem and Happel.

\begin{thm}[\cite{assem1981generalized} and Theorem 2.2 in \cite{qiu2013ext}] \label{thm: assem happel classification}
    Let $A = \kk Q/I$. Then $A$ is an iterated tilted algebra of type $A_n$ if and only if $A$ is a gentle tree algebra with $n$ simple modules.
\end{thm}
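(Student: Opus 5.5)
We prove the two implications of Theorem~\ref{thm: assem happel classification} separately. Throughout, $A=\kk Q/I$ has $n$ simple modules, and we use Happel's theorem: if $T$ is a tilting module over a hereditary (or, more generally, finite global dimension) algebra $B$, then $D^b(\text{mod-}B)\simeq D^b(\text{mod-}\text{End}(T))$. Consequently every iterated tilted algebra of type $A_n$ is derived equivalent to $\Lambda=\kk Q$ with $Q$ linearly oriented of type $A_n$.

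\emph{Iterated tilted $\Rightarrow$ gentle tree.} Argue by induction on the length $k$ of the tilting chain $A_1=\Lambda,\dots,A_{k+1}=A$. For $k=0$, $\kk Q$ for a type $A_n$ quiver is trivially a gentle tree algebra with $I=0$.

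For the inductive step, suppose $A_i$ is a gentle tree algebra. Write a tilting $A_i$-module $M$ as a sum of string modules; gentle algebras are string algebras, and their indecomposables here have no bands because the quiver is a tree. One then computes $\text{End}(M)$ combinatorially. Its quiver is the Hom-quiver among the summands, modulo radical-squared. Its relations come from compositions of irreducible maps between summands that vanish, and these are zero relations of length two.

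We must then check the gentle axioms. First, the quiver of $\text{End}(M)$ is a tree. This holds because the Euler form is preserved under derived equivalence, so the Cartan data forces $n-1$ arrows; connectivity plus $n$ vertices then gives a tree. Second, each vertex meets at most two arrows in each direction, with the gentle pairing conditions. This is the hard combinatorial core. A cleaner route is to use the known characterisation that gentle algebras are closed under derived equivalence (Schröer--Zimmermann), together with the fact that a gentle algebra derived equivalent to a hereditary $A_n$ algebra has Euler characteristic forcing a tree quiver.

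\emph{Gentle tree $\Rightarrow$ iterated tilted.} Proceed by induction on the number of relations. If $I=0$, then $A=\kk Q$ with $Q$ a tree in which each vertex has at most two neighbours in the gentle sense. We need this to force the type-$A$ shape: with no relations, the gentle condition forbids three arrows at a vertex, so $Q$ is a path, i.e. of type $A_n$, and $A$ is hereditary.

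If $I\neq 0$, choose a relation $\alpha\beta\in I$ at a vertex $v$. Apply an APR-type (or Brenner--Butler) tilt at a suitable sink or source, or more generally a tilting module $T=\bigoplus_{w\ne v}P_w\oplus \tau^{-1}S_v$. The goal is for the resulting endomorphism algebra to be a gentle tree algebra with one fewer relation. It is cleaner to run this in reverse: exhibit the tilted algebra $A'$ with fewer relations, and a tilting $A'$-module whose endomorphism algebra is $A$. Both steps are local computations at $v$ on the string modules. Iterating reduces $A$ to a hereditary $\kk Q'$ of type $A_n$.

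The main obstacle is making the local tilting move at a relation explicit, and verifying that it both stays within gentle trees and strictly decreases the number of relations. Both directions hinge on that computation. I would first try the reflection at a vertex adjacent to a leaf of the tree, where the string combinatorics is smallest, and induct by removing leaves.
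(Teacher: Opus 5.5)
The paper does not prove this statement; it imports it from Assem--Happel (see also Theorem 2.2 of Qiu). Your proposal therefore has to stand on its own, and as written it has a genuine gap in each direction.

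The serious gap is in \emph{gentle tree $\Rightarrow$ iterated tilted}, which is the real content of the Assem--Happel theorem. You leave its only nontrivial step undone, as you say yourself. The vertex at which to tilt is never pinned down. There is no argument that your module is tilting, that its endomorphism algebra is again a gentle tree algebra, or that the number of relations strictly drops.

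The candidate you do write down fails when $v$ is the middle vertex of the relation. Take $1\xrightarrow{\alpha}2\xrightarrow{\beta}3$ with $\alpha\beta=0$ and $v=2$, with $P_i$ spanned by the paths starting at $i$. Then $\tau^{-1}S_2=S_1$, which has projective dimension $2$, so $P_1\oplus P_3\oplus S_1$ is not a tilting module. Genuine APR tilts sit at a vertex whose simple is projective (dually, injective), never at the middle vertex of a relation. So you would still need an explicit sequence of such tilts and a strictly decreasing invariant. Your base case, that $I=0$ forces a path, is correct.

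To close the gap you have two options. You can carry out such a local tilting computation. Alternatively, you can show directly that every gentle tree algebra on $n$ vertices is derived equivalent to $\Lambda$, and then invoke Happel's theorem that an algebra derived equivalent to a Dynkin path algebra is iterated tilted of that type. The derived equivalence could come from an explicit tilting complex, or from the surface model of gentle algebras, where gentle trees correspond to dissections of a disc with $n+1$ marked points.

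For \emph{iterated tilted $\Rightarrow$ gentle tree}, the Schr\"oer--Zimmermann route correctly gives gentleness once Happel's theorem yields $D^b(\mathrm{mod}\,A)\simeq\cD$. Your reason for the quiver being a tree, however, is false. The Euler form (equivalently, the Cartan matrix up to congruence) does not determine the number of arrows, even among gentle algebras.

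Consider the gentle algebra with arrows $\alpha\colon 1\to 2$, $\beta\colon 2\to 3$, $\gamma\colon 1\to 3$ and $I=(\alpha\beta)$. It has literally the same Cartan matrix as $\Lambda$ for $n=3$, since each has exactly one nonzero path $1\to 2$, $2\to 3$ and $1\to 3$. Hence it has the same Euler form and Coxeter polynomial, yet it has three arrows on three vertices.

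You need an invariant that genuinely detects cycles. For a monomial algebra on a connected quiver, scaling each arrow by a weight defines derivations. These give an injection $\kk^{|Q_1|-|Q_0|+1}\hookrightarrow\mathrm{HH}^1(A)$. Since $\mathrm{HH}^1(\Lambda)=0$ and Hochschild cohomology is a derived invariant, $|Q_1|\le n-1$, so the connected quiver is a tree. The Avella-Alaminos--Geiss invariant, which determines $|Q_1|$ for gentle algebras, works equally well.

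Your other suggestion, computing $\mathrm{End}(M)$ by hand, simply asserts that its relations are quadratic monomials. That assertion is exactly what would need proof.
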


Suppose that $\Lambda = \kk Q/I$ is a gentle tree algebra. Then we can provide a 2-coloring as in \cite{qiu2013ext}; namely, two arrows $\alpha$ and $\beta$ that are composable are colored the same if $\alpha\beta \in I$ or $\beta\alpha\in I$. Using this, we can define a \textbf{gentle tree} as a quiver $Q$ with a 2-coloring such that each vertex has at most one arrow of each color incoming or outgoing. In this paper, we consider all gentle trees up to color swap.

\begin{prop}[Proposition 2.4 in \cite{qiu2013ext}]\label{prop: coloring classification}
    The algebra $\kk Q/I$ is a gentle tree algebra if and only if $Q$ is a gentle tree and $I$ is either $I^+$, the ideal generated by all unicolor paths of length two, or $I^-$, the ideal of all alternating color paths of length two.
\end{prop}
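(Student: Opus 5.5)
The plan is to prove the two implications separately. The converse is a direct check against the definition. The forward implication reduces to building a 2-coloring locally at each vertex and then gluing the local colorings, which is possible because $Q$ is a tree.

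\emph{($\Leftarrow$)} Suppose $Q$ is a gentle tree and $I \in \{I^+, I^-\}$. Each vertex has at most one incoming and at most one outgoing arrow of each color, so it is the source and the target of at most two arrows. Fix an arrow $\alpha$ ending at $v$. The arrows leaving $v$ have pairwise distinct colors. Hence at most one of them, $\beta$, has the same color as $\alpha$, and at most one has the other color. For $I^+$ the first kind of path $\alpha\beta$ lies in $I$ and the second does not; for $I^-$ the roles are swapped. The symmetric statement for arrows $\gamma$ ending at the source of $\alpha$ is identical. Finally, $I$ is by definition generated by these length-two monomials, so all four gentle tree conditions hold. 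The tree condition on $Q$ is part of the hypothesis.

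\emph{($\Rightarrow$)} Suppose $\kk Q/I$ is a gentle tree algebra. I will construct a coloring of $Q$ for which $I = I^+$; swapping the rule below would give $I^-$ instead, and colorings are considered only up to color swap anyway. The local rule is: for composable arrows $\alpha$ into $v$ and $\beta$ out of $v$, give them the same color if $\alpha\beta\in I$ and different colors otherwise. In addition, the coloring must make distinct arrows into $v$ (respectively out of $v$) have different colors.

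\emph{Local step at a vertex $v$.} Let $a\le 2$ be the number of arrows into $v$ and $b \le 2$ the number out of $v$. The constraints form a complete bipartite graph $K_{a,b}$, each edge labelled ``same'' or ``different''.
\begin{itemize}
\item If $a\le1$ or $b\le1$, this graph is a star, hence a forest, so the labels can be satisfied.
\item If $a=b=2$, the gentle conditions say each incoming $\alpha$ has at most one $\beta$ with $\alpha\beta\in I$ and at most one with $\alpha\beta\notin I$. With two outgoing arrows this forces exactly one of each, and similarly from the outgoing side. So the $4$-cycle carries exactly two ``different'' labels, which is an even number, and the constraints are consistent.
\end{itemize}

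\emph{Distinct colors for parallel arrows at $v$.} Suppose there are two incoming arrows $\gamma_1,\gamma_2$ and at least one outgoing arrow $\beta$. The gentle conditions applied to $\beta$ give exactly one $\gamma_i$ with $\gamma_i\beta\in I$ and one with $\gamma_i\beta\notin I$. Therefore $\gamma_1,\gamma_2$ receive different colors. If there is no outgoing arrow, there is no constraint and we choose the colors different. Outgoing arrows are handled symmetrically.

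\emph{Gluing.} Consider the graph whose nodes are arrows, with two arrows adjacent when they share a vertex. Because $Q$ is a tree, every cycle in this graph lies inside the set of arrows at a single vertex. So I color vertex by vertex along a traversal of the tree. Each newly reached vertex shares exactly one already-colored arrow with the part done so far. Its local solution is unique up to swapping the two colors, so it can be flipped to agree on that shared arrow.

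\emph{Conclusion.} By construction, a composable pair is unicolor exactly when its composite lies in $I$. Since $I$ is generated by such length-two monomials, $I=I^+$. Each vertex has at most one arrow of each color in and out, so $Q$ is a gentle tree.

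I expect the main obstacle to be the consistency of the local constraints at a vertex of total degree four, together with the requirement that the two incoming (or two outgoing) arrows get distinct colors. Both are handled by the ``exactly one in $I$, exactly one not in $I$'' count extracted from the two gentle conditions.
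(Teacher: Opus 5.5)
Your argument is correct and complete. The paper gives no proof of this proposition; it quotes it from Qiu. Just before the statement, the paper only describes the coloring (``composable arrows are colored the same iff their composite lies in $I$''), which is exactly your local rule, and it never checks that this rule is consistent. Your work supplies that well-definedness check, which the paper leaves to the citation. At a vertex, the $4$-cycle carries exactly two ``different'' labels, and the count ``exactly one composite in $I$, exactly one not'' forces arrows with a common head or tail to get different colors. The gluing along a traversal of the tree then makes the local choices global.

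The more useful comparison is with Section~\ref{sec: gentle trees}. There the authors accept the proposition with both alternatives $I^+$ and $I^-$, and only then show that every gentle tree algebra is $\kk T/I^+$. They do this via Lemma~\ref{lem: canonical ideal}: multiply a gentle coloring $\varphi$ by a sign function $f$ that alternates along length-two paths and is constant on arrows sharing a head or a tail. That $f$ comes from Lemma~\ref{lem:epochs}, proved by leaf induction. The result is a gentle coloring $f\varphi$ that exchanges $I^+$ and $I^-$. Because you build the coloring from $I$ itself, your forward direction lands in the $I^+$ case directly, so that conversion step is unnecessary for this purpose. The paper's route additionally gives an explicit involution between the $I^+$- and $I^-$-presentations on a fixed quiver. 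Your traversal gluing and their leaf induction are the same propagation mechanism.

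Two small remarks. First, your aside that reversing the rule ``would give $I^-$, and colorings are considered only up to color swap anyway'' conflates two operations. A global color swap preserves both $I^+$ and $I^-$; reversing the rule is instead exactly the passage $\varphi\mapsto f\varphi$ above. You never use the aside, so nothing breaks. Second, in the ($\Leftarrow$) direction you claim that an alternating length-two path is not in $I^+$, and dually for $I^-$. This rests on the fact that an ideal generated by paths of length two contains a path of length two only if it is one of the generators, and it is worth stating.
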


Combining Proposition \ref{prop: coloring classification} and Theorem \ref{thm: assem happel classification}, we see that $\kk Q/I$ is an iterated tilted algebra of type $A_n$ if and only if $Q$ is a gentle tree and $I$ is either $I^+$ or $I^-$.

\subsection{Hom-Ext quivers and Ext quivers of hearts}

For any collection of modules in mod-$\Lambda$, by the \textbf{Hom-Ext quiver}, we mean the graded quiver with relations associated to the Yoneda Ext algebra where we consider Ext$^i(M,N)$ for $i = 0,1$. For a more detailed exposition, see Section 3 of \cite{igusa2025hom}. We say two Hom-Ext quivers are \textbf{isomorphic} if their underlying ungraded quivers with relations are isomorphic. Note that this definition is not as general as the one in \cite{igusa2025hom}; however, this one suffices for our needs as we only deal with quivers of type $A_n$. Now let $\cH$ be the heart of a bounded $t$-structure on $\cD$ and let $\cS$ be the finite set of simple objects in $\cH$. The \textbf{Ext quiver} of $\cH$ is the positively graded quiver with vertices the objects of $\cS$ and whose graded edges correspond to a basis of $\bigoplus_{i \in \mathbb{Z}}[-i]\text{Hom}_{\cD}(\cS,\cS[i])$.

By a \textbf{graded gentle tree} $G$, we mean a gentle tree with a positive grading on each arrow. To each graded gentle tree, we define the \textbf{associated quiver} $Q(G)$ to be the graded quiver with the same vertex set as $G$, and an arrow $a:i\rightarrow j$ for each unicolored path $p:i\rightarrow j$ in $G$, with the grading of $p$. The following result was proven in \cite{qiu2013ext}.

\begin{thm}[Theorem 2.11 in \cite{qiu2013ext}] \label{thm: Ext hearts quiver of gentle tree}
The Ext quivers of hearts in $\cD$ are precisely the associated quivers of graded gentle trees with $n$ vertices.
\end{thm}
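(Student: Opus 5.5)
This statement is cited from \cite{qiu2013ext}. We will use it as a black box in the bijections that follow, but here is how we would reconstruct its proof.

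The strategy is induction along simple tilts, starting from the standard heart. For $\cH_0=\text{mod-}\Lambda$ with $\Lambda=\kk Q$ linearly oriented, the simples $S_1,\dots,S_n$ satisfy $\text{Hom}_{\cD}(S_i,S_j[1])\neq 0$ exactly when there is an arrow $i\to j$. Every other $\text{Hom}_{\cD}(S_i,S_j[m])$ with $i\neq j$ vanishes. So the Ext quiver is the linear $A_n$ quiver with all arrows in degree $1$. This is the associated quiver of the graded gentle tree on the same underlying quiver in which consecutive arrows alternate colour, so that there are no unicolour paths of length $\ge 2$. In type $A$, every heart of a bounded $t$-structure is algebraic (Remark \ref{rem: type A algebraic}), and any two are connected by a finite sequence of simple tilts, possibly after a shift. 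We would establish two facts by induction on this sequence.

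\textbf{Containment.} Every Ext quiver is an associated quiver. Suppose $\cH$ has Ext quiver $Q(G)$ and we tilt at a simple $S$. The new simple-minded collection is obtained by the standard formula: $S$ is replaced by $S[1]$, and each other simple $X$ is replaced by the cone (iterated extension) of $X$ by $\text{Hom}(S,X[1])\otimes S$ or by $S\otimes \text{Hom}(X,S[1])$, depending on the side of the tilt. Since the collection is exceptional, these Hom-spaces have dimension at most one. We then compute the new graded Hom-spaces. The claim to verify is that the result is $Q(G')$, where $G'$ is obtained from $G$ by a local move at the vertex $S$. In this move, the at most two incident arrows of each colour are rerouted, their colours are swapped locally, and the degrees are adjusted by $\pm 1$. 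A case analysis on the local configuration at $S$ suffices: each colour class contributes at most one incoming and one outgoing arrow, so there are finitely many cases. In each case one checks that $G'$ is still a gentle tree (acyclic, degree conditions) and that all degrees stay positive, the latter because $\cH$ is a heart.

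\textbf{Reverse inclusion.} Every graded gentle tree $G$ arises. Here we would induct on $\sum \deg$ together with the shape of the tree. The key observation is that the local move is invertible and is realised by the inverse tilt. Moreover, any graded gentle tree can be reduced by such moves to the standard alternating, degree-$1$ linear tree. To do this, first reduce degrees by tilting at sinks or sources of a colour class, then straighten the tree. Alternatively, one can count: show the moves act transitively on graded gentle trees with $n$ vertices, using a leaf-removal argument that reduces to $n-1$ vertices.

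\textbf{Main obstacle.} The hard step is the local computation in the containment direction. We must show that the new higher Homs $\text{Hom}(S'_a,S'_b[m])$ coincide exactly with the unicolour paths of $G'$, with the correct gradings. In particular, the composition structure must be preserved: the Ext algebra is $\kk G$ modulo $I^-$-type relations on alternating paths, so unicolour paths compose nontrivially. We would handle this by computing the Ext algebra rather than just the quiver, checking that it remains the graded gentle algebra of $G'$. We would then use the $A_\infty$-formality available for hereditary type $A$ to control higher products.
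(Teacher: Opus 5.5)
The paper gives no proof of this statement. It is quoted from \cite{qiu2013ext} and used only as an input to Lemma~\ref{lem: hearts and graded trees}, which is exactly how your first sentence treats it, so as a black box your treatment matches the paper's. As a reconstruction, however, it has a genuine gap, and it sits where you placed the ``main obstacle''. The containment step rests on the claim that tilting at a simple $S$ turns $Q(G)$ into $Q(G')$ for some local move $G\mapsto G'$. But $G'$ is never defined and the case analysis is never done, so nothing is verified, and that computation is the whole content of the theorem. The move is also more than a recolouring and regrading at $S$. For instance, if $W\to X\to S$ is unicolour and $X\to S$ has degree $1$, the long exact sequences of $S\to\psi(X)\to X\to S[1]$ give $\Hom(W,\psi(X)[\ast])=0$, and $W$ becomes attached to $S[1]$ instead. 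Those connecting maps are isomorphisms exactly when the relevant composites are unicolour, so the induction must be run on the graded Ext \emph{algebra} $\kk G/I^-$, not on the quiver alone.

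Three further points need repair. Exceptionality does not bound Hom-dimensions by one in general (think of the Kronecker quiver). In type $A$ the bound holds for a different reason: between indecomposables of $\cD$ at most one $\Hom(X,Y[m])$ is nonzero, and it is one-dimensional. No outside formality theorem is needed. Composable basis elements of $\kk G/I^-$ concatenate to a directed path in the tree, whose endpoints have nonzero $\Hom$ only in the total degree, while $m_k$ has degree $2-k$; hence $m_k=0$ for $k\ge 3$ for degree reasons. Finally, connectivity of hearts under simple tilts in Dynkin type is a real theorem and must be cited, e.g.\ Aihara--Iyama's transitivity of silting mutation transported through Theorem~\ref{thm: koenig-yang}.

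The reverse inclusion is only asserted: transitivity of the moves on graded gentle trees is not shown. There is a much shorter route.
\begin{enumerate}
\item For a gentle tree $Q$, the algebra $B=\kk Q/I^+$ is iterated tilted of type $A_n$ by Theorem~\ref{thm: assem happel classification}, so $D^b(\text{mod-}B)\simeq\cD$.
\item The image of the standard heart has Ext algebra equal to the Koszul dual of the quadratic monomial algebra $B$. This is the gentle tree algebra on $Q$ or $Q^{\mathrm{op}}$ (according to convention) in which exactly the unicolour paths survive, with all arrows in degree $1$. Reversing all arrows is a bijection on gentle trees, so every gentle tree occurs.
\item Because the underlying graph is a tree, any prescribed positive degrees are obtained by shifting the simples individually.
\item Positivity of all degrees is exactly conditions (1)--(2) for a simple-minded collection, and (3) is shift-invariant, so Theorem~\ref{thm: koenig-yang} returns a heart.
\end{enumerate}

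Containment can also be obtained without tilting, from this paper's own tools. The unshifted simples form a complete exceptional collection $\chi$ (Theorem~\ref{thm: mar24}), whose Hom--Ext quiver is $F(T(\chi))$ by Proposition~\ref{prop:F-HomExt}. Its underlying quiver is a tree with $n-1$ arrows. Colour each arrow by the bipartition class of the polygon vertex at which it arises; this is a gentle colouring whose unicolour paths are exactly the nonzero composites. Restoring the shifts makes all degrees positive, because $\cH$ is a heart.
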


\section{Gentle Trees}
\label{sec: gentle trees}

That $\kk Q/I$ can arise either from the $I_{+}$ ideal generated by length two paths of the same color or $I_{-}$ ideal generated by length two paths of distinct colors leads to ambiguity in the right choice of representative for $I$. In particular, it is not clear whether $I$ can always be assumed to arise from $I_{+}$, or if some choices of $I$ only appear as an $I_{-}$. In the following, we give a combinatorial argument that one can always find a representative that is of the form of $I_{+}$ by constructing an involution swapping $I_{+}$ and $I_{-}$. 

The definition previously in the literature of a gentle tree is a $2$-colored directed graph satisfying: 
\begin{itemize}
    \item[(i)] Its underlying graph is a tree
    \item[(ii)] Every vertex has in-degree at most $2$ and out-degree at most $2$
    \item[(iii)] Its edges are $2$-colored so that any two edges ending at or starting at the same vertex have distinct colors.
\end{itemize}
For our purposes, we need to disambiguate the graph from its colorings. To avoid collisions with terminology in the literature, we call a directed graph satisfying (i) and (ii) a \textbf{gentle directed tree} and a coloring of the edges of that graph satisfying (iii) a \textbf{gentle coloring}.

\begin{lem}
\label{lem:epochs}
Let $T = (V,E)$ be a gentle directed tree. Then there exists a labeling of the edges $f: E \to \{-1,1\}$ such that for all $u,v,w \in V$,
\begin{align*}
    f(u,v) &\neq f(v,w) \text{ if } (u,v), (v,w) \in E \\
    f(u,v) &= f(u,w) \text{ if } (u,v), (u,w) \in E \\
    f(u,v) &= f(w,v) \text{ if } (u,v), (w,v) \in E.
\end{align*} 
\end{lem}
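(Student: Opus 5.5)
We construct $f$ vertex by vertex, using that $T$ is a tree. The conditions say that at each vertex $v$, all incoming edges share one label, all outgoing edges share one label, and these two labels are opposite. Equivalently, we want a function $g: V \to \{-1,1\}$ with $f(u,v) := g(v)$, so that incoming edges at $v$ automatically agree. Then the first condition reads $g(v) \neq g(w)$ for every edge $(v,w)$. The second condition, $g(v)=g(w)$ for two out-neighbours $v,w$ of $u$, is forced by the first: both $g(v)$ and $g(w)$ must differ from $g(u)$.

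So the plan reduces the lemma to a proper $2$-coloring of the underlying graph of $T$. Concretely, we would carry out the following steps.

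\begin{enumerate}[label=(\arabic*)]
    \item Define $g$. Since the underlying graph of $T$ is a tree, it is bipartite. Fix a root $r$ and set $g(v) = (-1)^{d(r,v)}$, where $d$ is the graph distance in the underlying undirected tree.
    \item Define $f(u,v) := g(v)$ for each edge $(u,v) \in E$.
    \item Check the third condition. It holds by construction, since both edges into $v$ get the label $g(v)$.
    \item Check the first condition. For $(u,v),(v,w)\in E$ we have $f(u,v)=g(v)$ and $f(v,w)=g(w)$. Since $v$ and $w$ are adjacent, their distances to $r$ differ by exactly $1$, so $g(v)\neq g(w)$.
    \item Check the second condition. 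For $(u,v),(u,w)\in E$, both $v$ and $w$ are adjacent to $u$, so $g(v) = -g(u) = g(w)$.
\end{enumerate}

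There is no real obstacle here. The only conceptual step is recognizing that the labeling should depend only on the head of each edge; once that is seen, everything reduces to bipartiteness of trees. Gentleness, condition (ii), is not even needed. It is worth remarking on this, since the lemma will presumably be used later together with a gentle coloring to build the involution between $I_+$ and $I_-$ by multiplying colors by $f$.
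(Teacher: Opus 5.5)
Your proof is correct, and it takes a different route from the paper.

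The paper argues by induction on the number of vertices. It removes a leaf edge $(u,v)$ and labels the smaller tree using the inductive hypothesis. It then checks case by case, according to which in- and out-edges are present at the non-leaf endpoint, that the removed edge can be given a compatible label. The case where the head is the leaf is handled by reversing all orientations.

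You instead write down one global labeling. The label of an edge depends only on its head, $f(u,v)=g(v)$, where $g$ is a proper $2$-coloring of the underlying graph. After that, each of the three conditions is a one-line check.

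Your version is shorter and avoids the case analysis. It also shows that the statement holds for any orientation of any bipartite graph: neither the degree bound (ii) nor the tree hypothesis is used beyond bipartiteness.

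Your ``equivalently'' is also fully justified, and this gives a structural bonus. Every valid $f$ has your form: take $g(v)$ to be the common label of the edges into $v$, or minus the common label of the edges out of $v$ if there are none. Then the first condition forces $g$ to be a proper coloring. So on a tree with at least one edge there are exactly two valid labelings, $f$ and $-f$.

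The paper's inductive argument fits the leaf-by-leaf style used elsewhere in the paper, but it is less direct and less informative than your construction.
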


\begin{proof}
Clearly this is true for a gentle directed tree with no edges. Proceed by induction. Suppose for the sake of induction that for any gentle directed tree with $1 \leq k < n$ vertices that such a choice of $f$ exists. 

Let $T$ be a gentle directed tree with $n$ vertices. Then $T$ is a tree and thus has a leaf with edge $(u,v)$ such that $u$ or $v$ has total degree $1$. By induction, there is a valid labeling with $(u,v)$ removed. Let $f$ be that labeling. Suppose first that $u$ is the vertex with total degree $1$. 

Adding $(u,v)$ back, we can force the label of $(u,v)$ to be distinct from the label of $(v,w)$ if $v$ has any out-degree. If there are two out-going edges from $v$, by assumption, they must have the same label, so the label of $(u,v)$ would still be distinct from them. Then if there is an incoming edge at $v$, $(u',v)$, it would have opposite label of the outgoing edges and therefore the same label as $(u,v)$ as desired. Hence, this is a valid labeling in that case. If any edges are missing, the scenario is only less restrictive and so a labeling can always be found. A similar argument works if $v$ is the vertex with total degree $1$. In particular, there is a symmetry of gentle directed trees by swapping the orientation on every edge, and then the induction is precisely the same.
\end{proof}

\begin{lem}\label{lem: canonical ideal}
Let $T = (V,E)$ be a gentle tree with gentle coloring $\varphi: E \to \{-1,1\}$. Then there is another gentle coloring $\psi: E \to \{-1,1\}$ such that $(T, \psi)$ is a gentle tree and for any $u, v, w \in V$ such that $(u,v), (v,w) \in E$, 
\[\psi(u,v) = \psi(v,w) \text{ if and only if } \varphi(u,v) \neq \varphi(v,w).\]
\end{lem}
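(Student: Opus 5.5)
The plan is to define $\psi$ as the pointwise product $\psi := \varphi \cdot f$, where $f: E \to \{-1,1\}$ is the labeling provided by Lemma \ref{lem:epochs}. We must then check two things: that $\psi$ is again a gentle coloring, and that it swaps the relation ``same color'' with ``different color'' along every path of length two.

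For the gentle coloring condition (iii), take two edges sharing a source, $(u,v),(u,w)\in E$. Since $\varphi$ is gentle, $\varphi(u,v)\neq\varphi(u,w)$. By Lemma \ref{lem:epochs}, $f(u,v)=f(u,w)$, so multiplying by this common value preserves the inequality, giving $\psi(u,v)\neq\psi(u,w)$. The same argument applies to two edges sharing a target, using the third property of $f$. Conditions (i) and (ii) concern only the underlying directed tree, which is unchanged. Hence $(T,\psi)$ is a gentle tree.

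For the swap property, take a path $(u,v),(v,w)\in E$. Then
\[
\psi(u,v)\psi(v,w)=\varphi(u,v)\varphi(v,w)\,f(u,v)f(v,w)=-\varphi(u,v)\varphi(v,w),
\]
because $f(u,v)\neq f(v,w)$ by the first property in Lemma \ref{lem:epochs}, so their product is $-1$. Since all values lie in $\{-1,1\}$, equality of two colors is equivalent to their product being $1$. Therefore $\psi(u,v)=\psi(v,w)$ holds exactly when $\varphi(u,v)\neq\varphi(v,w)$, as claimed.

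There is essentially no obstacle once Lemma \ref{lem:epochs} is available; the real content was constructing $f$. The only point requiring care is that the gentle directed tree underlying $(T,\varphi)$ meets the hypotheses of Lemma \ref{lem:epochs}, namely conditions (i) and (ii). This holds because a gentle tree is by definition a gentle directed tree equipped with a gentle coloring.
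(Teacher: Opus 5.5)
Your proposal is correct and matches the paper's proof: the paper also sets $\psi(u,v) = f(u,v)\varphi(u,v)$ with $f$ from Lemma~\ref{lem:epochs}, and checks the gentle-coloring condition at common sources and targets exactly as you do. For the swap property you multiply the two colors together while the paper substitutes $f(u,v) = -f(v,w)$ directly, but this is only a difference in bookkeeping.
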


\begin{proof}
By Lemma \ref{lem:epochs}, there exists $f: E \to \{-1,1\}$ such that for any $u, v, w \in V$
\begin{align*}
    f(u,v) &\neq f(v,w) \text{ if } (u,v), (v,w) \in E \\
    f(u,v) &= f(u,w) \text{ if } (u,v), (u,w) \in E \\
    f(u,v) &= f(w,v) \text{ if } (u,v), (w,v) \in E.
    \end{align*}
Define 
\[\psi(u,v) = f(u,v)\varphi(u,v).\]
First, we verify that this is still a gentle coloring. By construction of $f$ and the definition of a gentle coloring, for any $u, v, w \in V$ such that $(u,v), (u,w) \in E$, $f(u,v) = f(u,w)$ and $\varphi(u,v) \neq \varphi(u,w)$. It follows that
\[\psi(u,v) = f(u,v) \varphi(u,v) = f(u,w)\varphi(u,v) \neq f(u,w) \varphi(u,w) = \psi(u,w). \]
Thus, $\psi(u,v) \neq \psi(u,w)$. By precisely the same reasoning, for any $u,v,w \in V$ such that $(u,v)$ and $(w,v)$ are edges, $\psi(u,v) \neq \psi(w,v)$. Therefore, by definition, $\psi$ is a gentle coloring.

In addition, for any $u,v,w \in V$ such that $(u,v), (v,w) \in E$, $f(u,v) \neq f(v,w)$ or equivalently $f(u,v) = -f(v,w)$, so by definition
\[\psi(u,v) = f(u,v)\varphi(u,v) = -f(v,w) \varphi(u,v) = f(v,w) \varphi(v,w) = \psi(v,w).\]
if and only if $\varphi(u,v) = -\varphi(v,w)$, which occurs if and only if $\varphi(u,v) \neq \varphi(v,w)$ as desired.
\end{proof}

\section{Counting iterated tilted algebras}\label{sec2}

Recall the notation from Section~\ref{sec1}: let $Q$ be a quiver of type $A_n$ with linear orientation, $\Lambda = \kk Q$, and $\cD$ be the bounded derived category of mod-$\Lambda$. We will provide a count for the number of isoclasses of iterated tilted algebras of $\Lambda$ by showing that they are in bijection with isoclasses of Hom-Ext quivers of exceptional collections in mod-$\Lambda$. To do this, we first recall from Theorem \ref{thm: mar24} that for a finite-dimensional hereditary algebra $\Lambda$, the shift-equivalence classes of simple-minded collections in $D^b(\text{mod-}\Lambda)$ and the shift-equivalence classes of algebraic $t$-structures are both in bijection with the complete exceptional collections in mod-$\Lambda$ \cite[Theorem 4.4]{maresca2024exceptional}. We call two simple-minded collections \textbf{shift-equivalent} if their unshifted modules are isomorphic.

We now define an equivalence relation on exceptional collections by putting $E^*_1 \sim E^*_2$ if and only if their Hom-Ext quivers are isomorphic: $(Q^{E^*_1},I_1)\cong (Q^{E^*_2},I_2)$. We will call these \textbf{isoclasses of Hom-Ext quivers}. Similarly, we define an equivalence relation on shift-equivalence classes of simple-minded collections by saying that for any representatives $S_1$ and $S_2$ of two shift-equivalence classes, $S_1\sim S_2$ if and only if $\overline{Q(S_1)} \cong \overline{Q(S_2)}$, that is, their ungraded Ext quivers are isomorphic. Note that this is independent of choice of representative because if two simple-minded collections are shift-equivalent, their ungraded Ext quivers will be isomorphic. We will call these \textbf{isoclasses of shift-equivalent Ext quivers of simple-minded collections}. This allows us to define \textbf{isoclasses of Ext quivers of shift-equivalent hearts} as the sets of hearts of $t$-structures on $\cD$ whose simples have isomorphic ungraded Ext quivers.

\begin{lem}\label{lem: ec and hearts}
    There is a bijection between isoclasses of Ext quivers of shift-equivalent hearts in $\cD$ and isoclasses of Hom-Ext quivers of exceptional collections in mod-$\Lambda$. 
\end{lem}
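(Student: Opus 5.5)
The plan is to start from the bijection of Theorem~\ref{thm: mar24} between shift-equivalence classes of algebraic $t$-structures (equivalently, via Theorem~\ref{thm: koenig-yang}, shift-equivalence classes of simple-minded collections) and complete exceptional collections in mod-$\Lambda$. By Remark~\ref{rem: type A algebraic}, every bounded $t$-structure on $\cD$ is algebraic, so every heart is covered. I will show that this bijection carries the equivalence relation ``isomorphic ungraded Ext quiver'' on the first side to ``isomorphic Hom-Ext quiver'' on the second. It then descends to a bijection on equivalence classes.

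First I recall how the bijection of \cite{maresca2024exceptional} works. A simple-minded collection $S$ is sent to the set of unshifted objects $E^*=\{E_1,\dots,E_n\}$, where each simple of the heart is $E_i[m_i]$ for a module $E_i$ and an integer $m_i$ (every indecomposable in $\cD$ has the form $X_{i,j}[\ell]$). Conversely, an exceptional collection is sent to the class of simple-minded collections obtained by choosing suitable shifts. Since $\Lambda$ is hereditary, we have
\[
\Hom_{\cD}(E_i[m_i],E_j[m_j][k])\cong \Ext^{k+m_j-m_i}(E_i,E_j),
\]
which vanishes unless $k+m_j-m_i\in\{0,1\}$. Hence the arrows of the Ext quiver of $S$ correspond, with only a regrading, to basis elements of $\Hom(E_i,E_j)$ and $\Ext^1(E_i,E_j)$. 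These are exactly the arrows of the Hom-Ext quiver of $E^*$.

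The key step is to compare the relations, not just the arrows. Both quivers are built from the same Yoneda algebra $\bigoplus_{i,j}\Ext^{\ast}(E_i,E_j)$ restricted to degrees $0,1$. Shifting objects by $m_i$ changes only the grading and leaves Yoneda composition unchanged up to sign. Therefore the ungraded quivers with relations of $S$ and $E^*$ coincide, and two shift-equivalent simple-minded collections give isomorphic ungraded Ext quivers. I will conclude that $S_1\sim S_2$ if and only if $E_1^*\sim E_2^*$: an isomorphism on either side transports directly to the other, because the underlying ungraded data are literally the same. This yields a well-defined map on classes, injective and surjective since the underlying map is a bijection.

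I expect the main obstacle to be this identification of relations: checking that the Ext quiver of a heart, as defined via $\bigoplus_i \Hom_{\cD}(\cS,\cS[i])$, carries the same relations as the Hom-Ext quiver of \cite{igusa2025hom}. The Ext quiver's definition above mentions only arrows, so the comparison must be made at the level of the Yoneda algebra. If relations prove hard to match directly, the fallback in type $A$ is that the quiver with relations is determined by the arrows together with compositions of length two. Those compositions are computed identically in $\cD$ and in mod-$\Lambda$ because the shifts cancel.
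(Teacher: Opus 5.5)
Your setup is sound: the route from hearts to simple-minded collections to the unshifted exceptional collection, and the computation $\Hom_{\cD}(E_i[m_i],E_j[m_j][k])\cong\Ext^{k+m_j-m_i}(E_i,E_j)$, are both fine. The gap is in the claim that the arrows of the Ext quiver of $S$ ``are exactly the arrows of the Hom-Ext quiver of $E^*$,'' and in the conclusion that the ungraded data on the two sides are literally the same. As defined, $Q(S)$ has one arrow for \emph{every} basis element of $\bigoplus_{i}\Hom_{\cD}(\cS,\cS[i])$ in positive degree. By your computation, that means one arrow for every nonzero $\Hom$ or $\Ext^1$ class between the unshifted modules, composites included, and $Q(S)$ carries no relations. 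The Hom-Ext quiver instead has arrows only for irreducible classes, together with relations. For example, for $\{X_{0,3},X_{0,2},X_{0,1}\}$ the Hom-Ext quiver is $X_{0,3}\to X_{0,2}\to X_{0,1}$ with no relation (Figure~\ref{fig:F-construction-A3}). The Ext quiver of any corresponding heart has a third arrow $X_{0,3}\to X_{0,1}$ in degree $2$. This is why the associated quiver of a graded gentle tree has an arrow for every unicolored path, not just for every arrow.

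So only one implication comes for free. Isomorphic Hom-Ext quivers with relations give isomorphic algebras compatibly with vertices, hence equal dimensions of all $\Hom$ and $\Ext^1$ spaces, hence isomorphic $\overline{Q(S)}$. The converse is the real content of the lemma, and your fallback does not reach it. An isomorphism $\overline{Q(S_1)}\cong\overline{Q(S_2)}$ is only a bijection of vertices and arrows, and it records dimensions, not compositions. Knowing that Yoneda products agree in $\cD$ and in mod-$\Lambda$ therefore says nothing about whether the two Hom-Ext quivers are isomorphic.

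What is missing is a reconstruction of $(Q^{E^*},I)$ from the bare quiver $\overline{Q(S)}$, which needs type $A$ input. The paper supplies it as follows:
\begin{itemize}
\item All the relevant spaces are at most one-dimensional.
\item An arrow $x\to z$ is reducible exactly when it closes a triangle $x\to y\to z$. A triangle whose two-step composite vanished is excluded by Araya's tree model (Theorem~\ref{thm: araya}).
\item Deleting the triangle-closing arrows leaves the Hom-Ext arrows.
\item A length-two path among the remaining arrows is a relation exactly when $\overline{Q(S)}$ has no arrow from its start to its end.
\end{itemize}
This inverts $(Q^{E^*},I)\mapsto\overline{Q(S)}$, and only with it in hand does your bijection descend to the isoclasses.
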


\begin{proof}
    It follows from the Koenig-Yang correspondences (Theorem \ref{thm: koenig-yang}) that the simples in the hearts form a simple-minded collection, so the isoclasses of Ext quivers of shift-equivalent hearts in $\cD$ are in bijection with isoclasses of Ext quivers of shift-equivalent simple-minded collections. Now, let $S$ be a simple-minded collection and $\overline{Q(S)}$ its ungraded Ext quiver. Consider the exceptional collection $\overline{S}$ in mod-$\Lambda$ (Lemma 5.1 in \cite{simoes2022functorially} or Lemma 2.3 in 
    \cite{buan2012m}), that is, the collection of unshifted modules, and its Hom-Ext quiver $(Q^{\overline{S}},I)$. Thus, $\overline{Q(S)}$ is the ungraded Ext quiver of the exceptional collection $\overline{S}$ in mod-$\Lambda$, that is, there is one vertex for each module, and the arrows correspond to bases for Ext$^0(\overline{S},\overline{S})$ and Ext$^1(\overline{S},\overline{S})$. From $(Q^{\overline{S}},I)$, we attain $\overline{Q(S)}$ by adding an arrow from the start of a path to the end of a path for each relation-avoiding path, and removing the other relations. {\color{black} We can invert this construction as follows. In $\overline{Q(S)}$, a directed triangle of the form $\xymatrix{
  X_i \ar@/_0.7pc/[rr]_{\gamma} \ar[r]^{\alpha} & X_j \ar[r]^{\beta} & X_k}$ is present precisely when $\alpha\beta$ is a nonzero morphism or extension from $X_i$ to $X_k$. If the composition is nonzero, then this spans the one dimensional Hom or Ext space between $X_i$ and $X_k$ and the arrow $\gamma$ is precisely the arrow added in the completion of $(Q^{\overline{S}},I)$. If the composition is zero and the arrow $\gamma$ is present, the arcs corresponding to these three modules will not form a tree in Araya's geometric model, contradicting Theorem \ref{thm: araya}. Thus, to return to $(Q^{\overline{S}},I)$, we remove the arrow $\gamma$ from $\overline{Q(S)}$. If there is no such directed triangle and we have the path $X_i \rightarrow X_j \rightarrow X_k$, the lack of an arrow from $X_i$ to $X_k$ in $\overline{Q(S)}$ indicates that Hom$(X_i,X_k) = 0 =$ Ext$^1(X_i,X_k)$. Thus, the path $X_i \rightarrow X_j \rightarrow X_k$ in $(Q^{\overline{S}},I)$ is a relation.} So, under this operation, two Hom-Ext quivers are sent to isomorphic Ext quivers if and only if they are isomorphic. We conclude that isoclasses of Hom-Ext quivers of exceptional sets are in bijection with isoclasses of Ext quivers of shift-equivalent simple-minded collections, which are in bijection with isoclasses of Ext quivers of shift-equivalent hearts in $\cD$.
\end{proof}

Note that there is a surjection $\phi:\{\text{graded gentle trees}\} \rightarrow \{\text{gentle trees}\}$ gotten simply by forgetting the grading. Two graded gentle trees $T_1$ and $T_2$ get sent to isomorphic gentle trees by $\phi$ if and only if $T_1$ and $T_2$ are isomorphic as ungraded gentle trees; and if this holds, we say that $T_1$ and $T_2$ are \textbf{shift-equivalent graded gentle trees}. Then we have the following lemma.

\begin{lem}\label{lem: graded trees and gentle trees}
    There is a bijection between shift-equivalent graded gentle trees and isoclasses of gentle trees. \hfill \qed
\end{lem}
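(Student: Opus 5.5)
The statement is essentially the first isomorphism theorem for sets, applied to the forgetful map $\phi:\{\text{graded gentle trees}\}\to\{\text{gentle trees}\}$. Shift-equivalence of graded gentle trees is by definition the equivalence relation ``$\phi(T_1)\cong\phi(T_2)$''. So the plan is to show that $\phi$ induces a well-defined injective map on equivalence classes, and that this map hits every isoclass of gentle trees.

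\textbf{Definition of the map.} Let $\Phi$ send the shift-equivalence class of a graded gentle tree $T$ to the isoclass of the ungraded gentle tree $\phi(T)$. Here $\phi$ keeps the directed tree and its gentle coloring, taken up to color swap, and discards only the grading.

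\textbf{Well-definedness and injectivity.} Both follow from the definition. $T_1\sim T_2$ holds precisely when $\phi(T_1)\cong\phi(T_2)$. Read in one direction, this says $\Phi$ does not depend on the representative; read in the other, equal images force equal classes.

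\textbf{Surjectivity.} This is the only point needing an argument. Take any gentle tree $(Q,\varphi)$ on $n$ vertices, that is, a gentle directed tree with a gentle coloring. Give every arrow degree $1$, or indeed any positive integer. The result is a graded gentle tree $G$, because the definition of a graded gentle tree requires only a positive grading on each arrow of a gentle tree. Then $\phi(G)=(Q,\varphi)$. Hence every isoclass of gentle trees is attained, and $\Phi$ is a bijection.

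\textbf{Main subtlety.} The only care needed is that ``isomorphic as ungraded gentle trees'' means exactly the same thing in the definition of shift-equivalence as in the definition of isoclasses of gentle trees. Both must be quiver isomorphisms that preserve the 2-coloring up to a global color swap, since the paper considers gentle trees up to color swap. Once these two notions are fixed to coincide, the lemma is immediate.
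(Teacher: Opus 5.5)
Your proposal is correct and is essentially the paper's argument. The paper states the lemma with \qed right after noting that forgetting the grading gives a surjection $\phi$, and it defines shift-equivalence so that two graded gentle trees are shift-equivalent exactly when their images under $\phi$ are isomorphic; your explicit surjectivity check, grading every arrow by $1$, fills in the one step the paper leaves unstated.
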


Recall that to each graded gentle tree, we have defined the associated quiver, and this association is unique up to isomorphism of graded gentle tree. Let us consider \textbf{isoclasses of associated quivers} by defining two associated quivers to be isomorphic if their ungraded quivers are isomorphic.

\begin{lem}\label{lem: hearts and graded trees}
    There is a bijection between isoclasses of Ext quivers of shift-equivalent hearts in $\cD$ and shift-equivalent graded gentle trees.
\end{lem}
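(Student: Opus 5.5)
The plan is to pass through Qiu's Theorem \ref{thm: Ext hearts quiver of gentle tree} and then check that both equivalence relations, ``isomorphic ungraded Ext quivers'' on hearts and ``isomorphic as ungraded gentle trees'' on graded gentle trees, match up under the assignment $G \mapsto Q(G)$. First, I define a map $\Phi$ from shift-equivalence classes of graded gentle trees to isoclasses of Ext quivers of shift-equivalent hearts. Given a graded gentle tree $G$ with $n$ vertices, Theorem \ref{thm: Ext hearts quiver of gentle tree} says $Q(G)$ is the Ext quiver of some heart $\cH$, and I send $[G]$ to the class of $\cH$. For this to be well defined, I must check that if $G_1, G_2$ are isomorphic as ungraded gentle trees, then $\overline{Q(G_1)} \cong \overline{Q(G_2)}$. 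This is immediate because the ungraded associated quiver depends only on the ungraded colored quiver: its arrows are the unicolored paths. Surjectivity of $\Phi$ is the ``precisely'' direction of Theorem \ref{thm: Ext hearts quiver of gentle tree}, since every Ext quiver of a heart is $Q(G)$ for some graded gentle tree $G$.

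The main step, and the expected obstacle, is injectivity. I must show that if $\overline{Q(G_1)} \cong \overline{Q(G_2)}$ then $G_1$ and $G_2$ are isomorphic as (ungraded, colored up to color swap) gentle trees. I would reconstruct the ungraded gentle tree from $\overline{Q(G)}$ intrinsically. The arrows of $G$ are the unicolored paths of length one. An arrow of $\overline{Q(G)}$ coming from a unicolored path of length $\ge 2$ factors through intermediate vertices, so it is the ``long'' side of a directed triangle or longer transitive configuration. Concretely, I would prove the following claim: an arrow $a: i \to j$ of $\overline{Q(G)}$ comes from an edge of $G$ if and only if there is no path of length $\ge 2$ from $i$ to $j$ in $\overline{Q(G)}$. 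This uses that $G$ is a tree. A path $i \to j$ in $\overline{Q(G)}$ projects to a walk in the tree $G$, and since each associated arrow is itself a directed path in $G$, the walk must be the unique tree path. So $i \to j$ being an edge of $G$ means the only $i$-to-$j$ path is that edge. Conversely, a unicolored path of length $\ge 2$ decomposes into edges, which are themselves arrows of $Q(G)$. Hence the underlying quiver of $G$ is the ``Hasse diagram'' of $\overline{Q(G)}$, and it is recovered up to isomorphism.

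It remains to recover the coloring up to swap. Two composable edges $i \to j \to k$ of $G$ have the same color if and only if $\overline{Q(G)}$ contains the arrow $i \to k$. At a vertex, two edges both incoming or both outgoing must have different colors, by the gentle condition. These constraints propagate along the connected tree, so the coloring is determined up to a global swap once one edge is fixed. I would make this rigorous by induction on leaves, in the style of Lemma \ref{lem:epochs}. An isomorphism $\overline{Q(G_1)} \cong \overline{Q(G_2)}$ therefore restricts to an isomorphism of Hasse diagrams that preserves these color relations, giving an isomorphism of gentle trees up to color swap. This yields injectivity. Finally, I would note that shift-equivalent hearts have the same ungraded Ext quiver, so no information is lost on the heart side, and the bijection follows.
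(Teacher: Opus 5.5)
Your proposal is correct and takes essentially the same route as the paper. Both go through Qiu's Theorem~\ref{thm: Ext hearts quiver of gentle tree} and note that both equivalence relations depend only on ungraded data. The difference is that the paper simply asserts that isoclasses of ungraded associated quivers correspond to shift-equivalent graded gentle trees, whereas you prove the injectivity this requires: you recover $G$ from $\overline{Q(G)}$ as its transitive reduction, since directed walks in an oriented tree are unique tree paths, and you recover the coloring up to swap from whether $i\to k$ is an arrow over composable edges $i\to j\to k$.
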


\begin{proof}
    By Theorem \ref{thm: Ext hearts quiver of gentle tree}, there is a bijection between Ext quivers of hearts and quivers associated to graded gentle trees. Then, there is a surjection from the collection of Ext quivers of hearts to isoclasses of quivers associated to graded gentle trees. We see that two Ext quivers are sent to the same isoclass if and only if they are isomorphic as ungraded quivers. Therefore, there is a bijection between isoclasses of Ext quivers of shift-equivalent hearts in $\cD$ and isoclasses of associated quivers of graded gentle trees, which are in bijection with shift-equivalent graded gentle trees.
\end{proof}

Putting together all the lemmas, we obtain the following bijection.

\begin{thm}\label{thm: bij bw H-E and ITA}
    Isoclasses of Hom-Ext quivers of exceptional collections in mod-$\Lambda$ are in bijection with isoclasses of iterated tilted algebras of $\Lambda$.
\end{thm}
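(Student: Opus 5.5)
The plan is to compose the chain of bijections labelled 1, 2, 3, 4 in Figure \ref{fig: the bijection}. Lemma \ref{lem: ec and hearts} identifies isoclasses of Hom-Ext quivers of exceptional collections with isoclasses of Ext quivers of shift-equivalent hearts in $\cD$. Lemma \ref{lem: hearts and graded trees} identifies the latter with shift-equivalence classes of graded gentle trees. Lemma \ref{lem: graded trees and gentle trees} identifies those with isoclasses of (ungraded) gentle trees, still considered up to color swap. It therefore remains to establish bijection 4, between isoclasses of gentle trees and isoclasses of iterated tilted algebras of $\Lambda$, and then compose.

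For bijection 4, I would define $\Phi(Q) = \kk Q/I^+$, where $I^+$ is generated by the unicolor paths of length two. This is well defined on isoclasses, since an isomorphism of colored quivers (possibly composed with a color swap) carries unicolor paths to unicolor paths and hence carries $I^+$ onto $I^+$. By Proposition \ref{prop: coloring classification} and Theorem \ref{thm: assem happel classification}, $\kk Q/I^+$ is an iterated tilted algebra of type $A_n$, where $n$ is the number of vertices.

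\emph{Surjectivity.} An arbitrary iterated tilted algebra is $\kk Q/I$ with $Q$ a gentle tree and $I \in \{I^+, I^-\}$. If $I = I^-$ for the coloring $\varphi$, apply Lemma \ref{lem: canonical ideal} to obtain a gentle coloring $\psi$ of the same directed tree in which a composable pair is unicolor exactly when it was bicolor under $\varphi$. Then $I^-_\varphi = I^+_\psi$, so the algebra equals $\Phi(Q,\psi)$.

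\emph{Injectivity.} Suppose $\kk Q/I^+ \cong \kk Q'/I'^+$. For basic algebras given by admissible quiver presentations, such an isomorphism yields an isomorphism of Gabriel quivers $Q \cong Q'$. For monomial algebras on trees it also carries the monomial ideal onto the monomial ideal: a tree has at most one path between any two vertices, so the zero-relations are intrinsic, detected as exactly the length-two paths whose composite vanishes. Hence the quiver isomorphism maps $I^+$ onto $I'^+$.

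The remaining task is to recover the coloring from the ideal. At each vertex there are at most two incoming and at most two outgoing arrows. The gentle-tree axioms say that each incoming arrow composes to zero with exactly one of the outgoing arrows and nonzero with the other, whenever both exist. Unicolor means "composition in $I^+$", and colors at a vertex must be distinct, so the partition into colors is forced locally. Since the tree is connected, propagating along edges shows the coloring is determined up to a global swap, which is exactly what "up to color swap" permits. Hence $(Q,\varphi)\cong(Q',\varphi')$ as gentle trees.

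I expect this injectivity step to be the main obstacle. One must justify that algebra isomorphism induces an isomorphism of quivers with relations, which holds because the ideal is admissible and monomial on a tree, where parallel paths are absent. One must also handle vertices of low degree. In particular, an arrow with no composable neighbours has an unconstrained color choice, and I would need to check that a global swap together with the quiver isomorphism absorbs this freedom. Local propagation may leave several independent color choices on different branches, so I would try to show that any recoloring preserving "unicolor on composable pairs" is an isomorphism of gentle trees in the intended sense. Alternatively, I would note that the equivalence relation on gentle trees is effectively by (quiver, $I^+$), which is what the paper's remark "uniquely determined by $Q$ and $I$" relies on. With this established, composing bijections 1 through 4 proves the theorem.
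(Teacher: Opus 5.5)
Your proposal is correct and follows the paper's route. You compose Lemmas~\ref{lem: ec and hearts}, \ref{lem: hearts and graded trees} and \ref{lem: graded trees and gentle trees}, then realize bijection 4 by $T\mapsto \kk T/I^+$. Surjectivity comes from Lemma~\ref{lem: canonical ideal}, and injectivity from the fact that an isomorphism of gentle tree algebras induces an isomorphism of quivers with relations; the paper detects $\alpha\beta\in I$ by $e_kAe_i=0$, which is your ``at most one path in a tree'' observation.

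The worry you end on is unfounded, and your first propagation argument already settles it. Any two arrows sharing a vertex have a forced color relation: different if both enter or both leave, and equal if and only if their length-two composite lies in $I^+$. So in a tree with at least two arrows, no arrow is unconstrained. Since the line graph of a tree is connected, the coloring is determined by $(Q,I^+)$ up to a global swap. This is a step the paper itself leaves implicit when it asserts that the gentle tree is unique up to isomorphism.
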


\begin{proof}
    By Lemmas \ref{lem: ec and hearts}, \ref{lem: graded trees and gentle trees}, and \ref{lem: hearts and graded trees}, we see that isoclasses of Hom-Ext quivers are in bijection with isoclasses of gentle trees. By Proposition \ref{prop: coloring classification}, we see that $A = \kk T/I$ is an iterated tilted algebra of type $\Lambda$ if and only if $T$ is a gentle tree and $I = I^+$ or $I=I^-$, so each gentle tree generates two iterated tilted algebras a priori. By Lemma \ref{lem: canonical ideal}, we see that we can choose one such ideal, $I^+$. In particular, if $A \cong \kk T/I^-$, there exists a $T'$ such that $A \cong \kk T'/I^+$. Moreover, such a $T'$ is unique up to isomorphism because if $A_1$ and $A_2$ are gentle tree algebras, they are isomorphic if and only if their underlying quivers with relations, $(Q_1,I_1)$ and $(Q_2,I_2)$, are isomorphic. {\color{black} Suppose $A_1 \overset{\Phi}{\rightarrow} A_2$ is an algebra isomorphism. After choosing a complete set of primitive orthogonal idempotents for $A_1$ and $A_2$, this induces a bijection $\varphi$ from the chosen idempotents of $A_1$ to the chosen idempotents of $A_2$, and hence a bijection between the vertices of their respective Gabriel quivers $Q_1 \overset{\varphi}{\rightarrow} Q_2$. For any two vertices $x,y\in Q_1$, the isomorphism also induces a linear bijection $e_x\left({\text{rad}(A_1)\over \text{rad}^2(A_1)}\right)e_y \rightarrow e_{\varphi(x)}\left({\text{rad}(A_2)\over \text{rad}^2(A_2)}\right)e_{\varphi(y)}$. Since $A_1$ and $A_2$ are gentle tree algebras, both these spaces are at most one dimensional, so an arrow $x\rightarrow y$ exists in $Q_1$ if and only if an arrow $\varphi(x)\rightarrow \varphi(y)$ exists in $Q_2$ and $\varphi$ is an isomorphism between the Gabriel quivers. Suppose now that there is a length two path $i \overset{\alpha}{\rightarrow} j \overset{\beta}{\rightarrow} k$ in $Q_1$. We have $\alpha\beta \in I_1$ if and only if $e_kA_1e_i = 0$. Since $A_1 \overset{\Phi}{\cong} A_2$, this vanishing is equivalent to $e_{\varphi(k)}A_2e_{\varphi(i)} = 0$, and we conclude $\alpha\beta \in I_1$ if and only if $\varphi(\alpha)\varphi(\beta)\in I_2$. Thus, $\varphi(I_1) = I_2$ and $(Q_1,I_1) \cong (Q_2,I_2)$.} Finally, we conclude that isoclasses of iterated tilted algebras are in bijection with isoclasses of gentle trees, which are in bijection with Hom-Ext quivers of exceptional collections in mod-$\Lambda$. 
\end{proof}

\section{Hom--Ext Quivers and Non-Crossing Trees Up To Rotation}\label{sec3}

To complete the enumeration of isoclasses of iterated tilted algebras, we must count the number of isoclasses of Hom-Ext quivers of exceptional collections in mod-$\Lambda$. To do this, we will use geometric models. In this section we prove that two complete exceptional collections in type \(A_n\) have isomorphic Hom--Ext quivers if and only if their associated non-crossing spanning trees differ by a cyclic rotation of the \((n+1)\)-gon. This, combined with Theorem \ref{thm: bij bw H-E and ITA} and a count of non-crossing spanning trees up to rotation will yield a count for the number of isoclasses of iterated tilted algebras of a quiver of type $A_n$, along with a count of the number of exceptional collections in type $A_n$ up to isomorphic Hom-Ext quiver.

We continue to fix the linearly oriented quiver \(Q\) of type \(A_n\) from Section~\ref{sec1}, with path algebra \(\Lambda := \kk Q\) and indecomposable modules \(X_{i,j}\) for \(0 \le i < j \le n\). Recall Araya's geometric model \cite{araya2013exceptional} from Section \ref{subsec: geom models}, where each \(X_{i,j}\) is identified with the chord \(c_{i,j}\) of the regular \((n+1)\)-gon joining vertices \(i\) and \(j\), and under this identification, complete exceptional collections (up to permutation) correspond bijectively to non-crossing spanning trees of the \((n+1)\)-gon \cite[Theorem 2.5]{araya2013exceptional}. We write \(T(\chi)\) for the tree associated to a complete exceptional collection \(\chi\). 

Let \((Q^\chi,R^\chi)\) be the Hom--Ext quiver with relations associated to \(\chi\) as in \cite[Definitions 3.1 and 3.3]{igusa2025hom}. We will prove the following theorem.

\begin{thm}\label{thm:main} Let \(\chi\) and \(\chi'\) be complete exceptional collections in mod-\(\Lambda\). Then
\[
(Q^\chi,R^\chi) \cong (Q^{\chi'},R^{\chi'})
\]
if and only if
\[
T(\chi')=\sigma^m(T(\chi))
\]
for some cyclic rotation \(\sigma^m\) of the \((n+1)\)-gon.
\end{thm}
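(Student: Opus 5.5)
The plan is to read the Hom--Ext quiver directly off the non-crossing spanning tree $T(\chi)$, and to show that this description depends only on the tree as a plane embedded tree up to rotation, and conversely determines it.

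\textbf{Step 1: a local dictionary.} For two chords $c_{i,j}$ and $c_{i',j'}$ of $T(\chi)$ that share an endpoint, exactly one of $\Hom(X,Y)$, $\Ext(X,Y)$, $\Hom(Y,X)$, $\Ext(Y,X)$ is nonzero, and it is one dimensional. This is a direct computation with interval modules. Take chords $c_{a,b}$ and $c_{a,c}$ sharing the vertex $a$, with $b,c$ in counterclockwise order. I will check that the nonzero space always goes from the chord that comes first in counterclockwise order around $a$ to the one immediately following it. Whether this space is a Hom or an Ext depends on whether the shared vertex is the smaller or larger index. For chords that are disjoint, all four spaces vanish.

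The arrows of $Q^\chi$ (the irreducible ones) then join chords that are consecutive in the cyclic order around a common vertex, except at the wrap-around position. Relations $R^\chi$ arise at a path $X\to Y\to Z$ through chords at two different endpoints of $Y$, where the composite vanishes. This matches the gentle structure from Section~\ref{sec: gentle trees}. Once the grading is forgotten, $(Q^\chi,R^\chi)$ is thus described purely by the rotation system of the plane tree: the cyclic order of edges at each vertex, together with a marked ``gap'' at each vertex where no arrow is drawn. The gap at vertex $v$ is the angle of the polygon's exterior at $v$.

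\textbf{Step 2: rotation invariance.} Rotation $\sigma$ preserves non-crossing trees, the cyclic orders at the vertices, and the exterior gaps. The only thing that changes is which adjacency between two chords is recorded as Hom versus Ext, since relabelling $i\mapsto i+1 \bmod (n+1)$ changes which endpoint is smaller. Because isomorphism of Hom--Ext quivers ignores the grading, $\sigma$ induces an isomorphism $(Q^\chi,R^\chi)\cong(Q^{\chi'},R^{\chi'})$. Concretely, I will verify for $X_{i,j}\mapsto X_{i+1,j+1}$, and for $X_{i,n}\mapsto X_{0,i+1}$, that nonzero Hom/Ext spaces correspond bijectively and that vanishing composites are preserved.

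\textbf{Step 3: the converse, which I expect to be the main obstacle.} Given the ungraded quiver with relations, I must reconstruct the plane tree up to rotation. The relations partition the arrows into maximal relation-free chains. Each chain is exactly the fan of chords around one polygon vertex, listed counterclockwise. Isolated ends (chords that are leaves) give fans of size one, and there are $n+1$ fans in total, so the fans recover the vertices of the polygon. The incidence of chords with fans recovers the abstract tree, and the chain orders recover the rotation system.

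A plane tree together with a marked exterior corner at every vertex, embedded in a convex polygon, is determined up to rotation of the labels. To see this, walk around the boundary of the outer face and read off the vertices in cyclic order; this recovers the labeling $0,\dots,n$ up to a cyclic shift. The delicate points are:
\begin{itemize}
\item distinguishing the relation-free chains correctly at degree-one and degree-two vertices, where the chains are short;
\item checking that the quiver isomorphism cannot mix a fan with its reverse. The orientation of arrows fixes counterclockwise direction, so a reflection is not possible.
\end{itemize}
I would handle these cases by induction on $n$, removing a leaf chord of the tree (a sink or source of $Q^\chi$ lying on a single fan). Finally, I would combine this with Theorem~\ref{thm: araya} to pass between collections and trees.
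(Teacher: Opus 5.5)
Your proposal is correct. Steps 1 and 2 are essentially the paper's Proposition~\ref{prop:F-HomExt} and the easy direction of Theorem~\ref{thm:comb}: the Hom--Ext quiver is $F(T)$, with arrows between chords consecutive at a polygon vertex, and a length-two path is zero exactly when its arrows come from different endpoints of the middle chord. Two facts are still owed here, and they are the same interval-module computations the paper uses: a nonzero class between non-consecutive chords at $p$ factors through an intermediate chord, and a composite of two arrows at the same vertex is nonzero. The direction of the arrows around a vertex depends on whether the polygon is labelled clockwise or counterclockwise, but only its uniformity matters.

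Your route genuinely differs from the paper's in the converse.

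The paper argues by induction on $n$:
\begin{enumerate}
\item degree-one vertices of $Q_T$ are exactly boundary leaf edges (Lemma~\ref{lem:degree-one-boundary-leaf}), so an isomorphism carries one to another;
\item deleting such an edge is compatible with $F$, and the rotation obtained inductively is lifted to the larger polygon;
\item Lemma~\ref{lem:reinsertion-unique} then pins down the reinserted leaf using the direction of its unique arrow and one relation at its neighbour.
\end{enumerate}

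You instead reconstruct everything at once:
\begin{enumerate}
\item the maximal relation-free paths are exactly the fans at polygon vertices of degree at least two;
\item the leaves are recovered because, for $n\ge 2$, each chord lies in one or two such paths, and the number of its leaf endpoints is two minus that number;
\item the order along each path gives the local order at the vertex together with the exterior gap;
\item the contour walk of the resulting plane tree recovers the cyclic order of the polygon vertices.
\end{enumerate}

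Your approach makes it transparent that $F(T)$ is exactly a plane tree with a marked corner at every vertex. It avoids the four-case reinsertion analysis. It also immediately yields the stronger clause of Theorem~\ref{thm:comb}, that every isomorphism is induced by a rotation, which is what gives $\mathrm{Aut}\cong\mathrm{Stab}$ in Theorem~\ref{thm:expseq-count}. Indeed, if $f$ is the induced bijection on polygon vertices, then $\varphi(\{a,b\})=\{f(a),f(b)\}$.

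The cost is a geometric input you assert but should prove: the contour walk of a non-crossing spanning tree passes the exterior corners in the boundary cyclic order. One way is to note that the region between the contour of the thickened tree and the polygon boundary is an annulus pinched at the $n+1$ vertices, so both curves pass the pinch points in the same cyclic order; an induction on leaves also works. The paper's induction stays purely local and combinatorial and needs no such fact. The leaf-removal induction you mention as a fallback is exactly the paper's strategy, but with your chain decomposition it is not needed.
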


Our strategy is to define a function \(F\) that maps non-crossing spanning trees to quivers with relations, prove that \(F(T) \cong F(T')\) if and only if \(T\) and \(T'\) differ by a rotation (Theorem~\ref{thm:comb}), and then verify that \(F(T(\chi))\)
recovers \((Q^\chi, R^\chi)\) (Proposition~\ref{prop:F-HomExt}). Combining these yields Theorem~\ref{thm:main}.

\subsection{The function \(F\)}
\label{subsec: F}

Throughout this section, the regular \((n+1)\)-gon \(P_{n+1}\) has vertices labeled \(0, 1, \dots, n\) in clockwise order, identified with elements of \(\mathbb{Z}/(n+1)\mathbb{Z}\) so that all index arithmetic is taken modulo \(n+1\). At each polygon vertex \(p\), the local cyclic order on incident tree edges is the counterclockwise order around \(p\). All cyclic order conditions below refer to this counterclockwise order.

In this section a tree \(T\) refers to a non-crossing spanning tree in $P_{n+1}$, and is a different combinatorial object from the gentle trees of Section~\ref{sec2}.

\begin{defn}
Let \(T\) be a non-crossing spanning tree of \(P_{n+1}\). Define \(F(T)=(Q_T,R_T)\) as follows. 

\textbf{Vertices:} The vertices of \(Q_T\) are $\{e_1, e_2, \dots, e_r\}$ and are in bijection with the edges of \(T\).

\textbf{Arrows:} For each polygon vertex \(p\), let
\[
e_{i_1},e_{i_2},\dots,e_{i_r}
\]
be the edges of \(T\) incident to \(p\), listed in counterclockwise order. Add arrows
\[
e_{i_1} \to e_{i_2} \to \dots \to e_{i_r}
\]

\textbf{Relations:} Relations are generated by length two paths in which the first and last edges don't share an endpoint. More formally, for a length-two path
\[
e_i\to e_j\to e_k
\]
in \(Q_T\), let the arrow \(e_i \to e_j\) come from a polygon vertex, \(p\), and the arrow \(e_j \to e_k\) from vertex \(q\). Both \(p\) and \(q\) are endpoints of \(e_j\). Then,
\[
e_i \to e_j \to e_k=0 \qquad \text{if and only if} \qquad p \neq q.
\]
\label{def: F}
\end{defn}

Figure~\ref{fig:F-construction-A3} illustrates the function \(F\) for type \(A_3\), showing two of the four total rotation classes of non-crossing spanning trees for the square together with their associated Hom--Ext quivers.

\begin{rem}[Right-hand-rule]
Both the arrow rule and the relation rule can be remembered by a right-hand rule. Place the polygon flat with vertices labeled clockwise as the reader sees them, and orient the right hand so that its fingers curl in the counterclockwise direction around the polygon (equivalently, the thumb points out the page). Arrows go in the direction of the finger-curl around each polygon vertex. A length-two path is zero exactly when its two arrows are read off at two different polygon vertices. Thus, when following the curl at an endpoint \(f\), if the curl points to the other endpoint of \(f\), there is a relation.
\label{rem: rhr}
\end{rem}

\begin{figure}[h]
\centering
\centering
\begin{tikzpicture}[scale=0.9, every node/.style={font=\sffamily\bfseries\footnotesize}]
\tikzset{
    square/.style={thick},
    redline/.style={darkred, ultra thick},
    labelnode/.style={inner sep=3pt},
    arrow/.style={->, >=stealth, ultra thick},
    transarrow/.style={->, >=stealth, thick}
}

\begin{scope}[shift={(0,0)}]
    \draw[square] (0,0) rectangle (2,2);
    \node[labelnode] at (-0.3, 2.3) {3};
    \node[labelnode] at (2.3, 2.3) {0};
    \node[labelnode] at (2.3, -0.3) {1};
    \node[labelnode] at (-0.3, -0.3) {2};
    \draw[redline] (0,0) -- (2,2);
    \draw[redline] (0,2) to[bend right=25] (2,2);
    \draw[redline] (2,0) to[bend left=25] (2,2);
    \node[below=0.5] at (1,0) {
        \(\begin{matrix} \mathbf{1} \\ \mathbf{2} \\ \mathbf{3} \end{matrix} \longrightarrow \begin{matrix} \mathbf{1} \\ \mathbf{2} \end{matrix} \longrightarrow \mathbf{1}\)
    };
    \draw[transarrow] (2.5, 1.5) to[bend left=45] (3.5, 0.5);
\end{scope}

\begin{scope}[shift={(4,0)}]
    \draw[square] (0,0) rectangle (2,2);
    \node[labelnode] at (-0.3, 2.3) {3};
    \node[labelnode] at (2.3, 2.3) {0};
    \node[labelnode] at (2.3, -0.3) {1};
    \node[labelnode] at (-0.3, -0.3) {2};
    \draw[redline] (0,2) -- (2,0);
    \draw[redline] (2,2) to[bend right=25] (2,0);
    \draw[redline] (0,0) to[bend left=25] (2,0);
    \node[below=0.5] at (1,0) {
        $\mathbf{1} \longrightarrow \begin{matrix} \mathbf{2} \\ \mathbf{3} \end{matrix} \longrightarrow \mathbf{2}$
    };
    \draw[transarrow] (2.5, 1.5) to[bend left=45] (3.5, 0.5);
\end{scope}

\begin{scope}[shift={(8,0)}]
    \draw[square] (0,0) rectangle (2,2);
    \node[labelnode] at (-0.3, 2.3) {3};
    \node[labelnode] at (2.3, 2.3) {0};
    \node[labelnode] at (2.3, -0.3) {1};
    \node[labelnode] at (-0.3, -0.3) {2};
    \draw[redline] (0,0) -- (2,2);
    \draw[redline] (0,2) to[bend left=25] (0,0);
    \draw[redline] (2,0) to[bend right=25] (0,0);
    \node[below=0.5] at (1,0) {
        $\mathbf{2} \longrightarrow \begin{matrix} \mathbf{1} \\ \mathbf{2} \end{matrix} \longrightarrow \mathbf{3}$
    };
    \draw[transarrow] (2.5, 1.5) to[bend left=45] (3.5, 0.5);
\end{scope}

\begin{scope}[shift={(12,0)}]
    \draw[square] (0,0) rectangle (2,2);
    \node[labelnode] at (-0.3, 2.3) {3};
    \node[labelnode] at (2.3, 2.3) {0};
    \node[labelnode] at (2.3, -0.3) {1};
    \node[labelnode] at (-0.3, -0.3) {2};
    \draw[redline] (0,2) -- (2,0);
    \draw[redline] (2,2) to[bend left=25] (0,2);
    \draw[redline] (0,0) to[bend right=25] (0,2);
    \node[below=0.5] at (1,0) {
        $\mathbf{3} \longrightarrow \begin{matrix} \mathbf{2} \\ \mathbf{3} \end{matrix} \longrightarrow \begin{matrix} \mathbf{1} \\ \mathbf{2} \\ \mathbf{3} \end{matrix}$
    };
\end{scope}

\begin{scope}[shift={(0,-5)}]
    \draw[square] (0,0) rectangle (2,2);
    \node[labelnode] at (-0.3, 2.3) {3};
    \node[labelnode] at (2.3, 2.3) {0};
    \node[labelnode] at (2.3, -0.3) {1};
    \node[labelnode] at (-0.3, -0.3) {2};
    \draw[redline] (0,2) to[bend right=25] (2,2);
    \draw[redline] (2,2) to[bend right=25] (2,0);
    \draw[redline] (2,0) to[bend right=25] (0,0);
    \node (n1) at (0.1, -1.3) {$\begin{matrix} \mathbf{1} \\ \mathbf{2} \\ \mathbf{3} \end{matrix}$};
    \node (n2) at (0.6, -1.3) {$\longrightarrow$};
    \node (n3) at (1.0, -1.3) {$\mathbf{1}$};
    \node (n4) at (1.4, -1.3) {$\longrightarrow$};
    \node (n5) at (1.8, -1.3) {$\mathbf{2}$};
    \draw[dotted, ultra thick] (n2.north) to[bend left=60] (n4.north);
    \draw[transarrow] (2.5, 1.5) to[bend left=45] (3.5, 0.5);
\end{scope}

\begin{scope}[shift={(4,-5)}]
    \draw[square] (0,0) rectangle (2,2);
    \node[labelnode] at (-0.3, 2.3) {3};
    \node[labelnode] at (2.3, 2.3) {0};
    \node[labelnode] at (2.3, -0.3) {1};
    \node[labelnode] at (-0.3, -0.3) {2};
    \draw[redline] (2,2) to[bend right=25] (2,0);
    \draw[redline] (2,0) to[bend right=25] (0,0);
    \draw[redline] (0,0) to[bend right=25] (0,2);
    \node (m1) at (0.2, -1.3) {$\mathbf{1}$};
    \node (m2) at (0.6, -1.3) {$\longrightarrow$};
    \node (m3) at (1.0, -1.3) {$\mathbf{2}$};
    \node (m4) at (1.4, -1.3) {$\longrightarrow$};
    \node (m5) at (1.8, -1.3) {$\mathbf{3}$};
    \draw[dotted, ultra thick] (m2.north) to[bend left=60] (m4.north);
    \draw[transarrow] (2.5, 1.5) to[bend left=45] (3.5, 0.5);
\end{scope}

\begin{scope}[shift={(8,-5)}]
    \draw[square] (0,0) rectangle (2,2);
    \node[labelnode] at (-0.3, 2.3) {3};
    \node[labelnode] at (2.3, 2.3) {0};
    \node[labelnode] at (2.3, -0.3) {1};
    \node[labelnode] at (-0.3, -0.3) {2};
    \draw[redline] (0,2) to[bend right=25] (2,2);
    \draw[redline] (2,0) to[bend right=25] (0,0);
    \draw[redline] (0,0) to[bend right=25] (0,2);
    \node (k1) at (0.2, -1.3) {$\mathbf{2}$};
    \node (k2) at (0.6, -1.3) {$\longrightarrow$};
    \node (k3) at (1.0, -1.3) {$\mathbf{3}$};
    \node (k4) at (1.4, -1.3) {$\longrightarrow$};
    \node (k5) at (1.8, -1.3) {$\begin{matrix} \mathbf{1} \\ \mathbf{2} \\ \mathbf{3} \end{matrix}$};
    \draw[dotted, ultra thick] (k2.north) to[bend left=60] (k4.north);
    \draw[transarrow] (2.5, 1.5) to[bend left=45] (3.5, 0.5);
\end{scope}

\begin{scope}[shift={(12,-5)}]
    \draw[square] (0,0) rectangle (2,2);
    \node[labelnode] at (-0.3, 2.3) {3};
    \node[labelnode] at (2.3, 2.3) {0};
    \node[labelnode] at (2.3, -0.3) {1};
    \node[labelnode] at (-0.3, -0.3) {2};
    \draw[redline] (0,2) to[bend right=25] (2,2);
    \draw[redline] (2,2) to[bend right=25] (2,0);
    \draw[redline] (0,0) to[bend right=25] (0,2);
    \node (l1) at (0.2, -1.3) {$\mathbf{3}$};
    \node (l2) at (0.6, -1.3) {$\longrightarrow$};
    \node (l3) at (1.0, -1.3) {$\begin{matrix} \mathbf{1} \\ \mathbf{2} \\ \mathbf{3} \end{matrix}$};
    \node (l4) at (1.4, -1.3) {$\longrightarrow$};
    \node (l5) at (1.8, -1.3) {$\mathbf{1}$};
    \draw[dotted, ultra thick] (l2.north) to[bend left=60] (l4.north);
\end{scope}
\end{tikzpicture}
\caption{Two rotation classes of non-crossing spanning trees of the square (type $A_3$), each shown with its corresponding Hom--Ext Quiver. The dotted arcs indicate that the corresponding path is a relation in $F(T)$.}
\label{fig:F-construction-A3}
\end{figure}

\subsection{Map to Hom--Ext Quivers}

Let \(\chi\) be a complete exceptional collection in type \(A_n\), and let \(T(\chi)\) be the corresponding non-crossing spanning tree under Araya's bijection.

\begin{prop}\label{prop:F-HomExt}
For every complete exceptional collection \(\chi\) in mod-\(\Lambda\),
\[
(Q^\chi,R^\chi) \cong F(T(\chi)).
\]
\end{prop}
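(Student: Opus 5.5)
The plan is to compute the Hom--Ext quiver of $\chi$ directly from the interval modules, and to match it to $F(T(\chi))$ edge by edge. By Theorem~\ref{thm: araya}, the vertices of $Q^\chi$ (the modules of $\chi$) are already in bijection with the edges of $T(\chi)$ via $X_{i,j}\mapsto c_{i,j}$. So it remains to show three things under this identification:
\begin{enumerate}[label=(\arabic*)]
\item the irreducible Hom/Ext$^1$ elements between members of $\chi$ are exactly the arrows of $Q_T$;
\item the vanishing length-two compositions are exactly $R_T$;
\item no other arrows or relations appear.
\end{enumerate}

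\textbf{Step 1 (pairwise spaces).} Recall the standard formulas for linearly oriented $A_n$ with the paper's conventions. $\mathrm{Hom}(X_{i,j},X_{i',j'})\neq 0$ exactly when $i'\le i<j'\le j$. By Auslander--Reiten duality, $\mathrm{Ext}^1(X_{i,j},X_{i',j'})\cong D\mathrm{Hom}(X_{i',j'},\tau X_{i,j})$, and in the relevant cases this is nonzero exactly when $i<i'\le j<j'$ with the appropriate boundary conventions. All these spaces are at most one-dimensional.
\begin{itemize}
\item If two chords of $T(\chi)$ are disjoint, then both orderings are exceptional pairs by \cite[Lemma~3.2]{araya2013exceptional}. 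Hence all four spaces $\mathrm{Hom}$ and $\mathrm{Ext}^1$, in both directions, vanish.
\item If the chords share an endpoint $p$, there are three configurations: common left endpoint ($X_{p,j},X_{p,j'}$), common right endpoint ($X_{i,p},X_{i',p}$), or $p$ is the right end of one and the left end of the other ($X_{i,p},X_{p,j}$). In the first two cases exactly one $\mathrm{Hom}$ is nonzero. In the third, $\mathrm{Ext}^1(X_{i,p},X_{p,j})$ (or its reverse, depending on convention) is the only nonzero space.
\end{itemize}
I will check, case by case, that the direction of the nonzero space always goes from the earlier to the later chord in the counterclockwise order at $p$, with vertices labeled clockwise as in Section~\ref{subsec: F}. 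This orientation bookkeeping must be done carefully, including the wrap-around when $p=0$ or $p=n$.

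\textbf{Step 2 (composition at a single vertex).} Fix $p$ and let $e_{i_1},\dots,e_{i_r}$ be the incident tree edges in counterclockwise order. In this order, the chords with right endpoint $p$ form one contiguous block and those with left endpoint $p$ form another, so the ordering switches type at most once. Consequently any chain $e_{i_a}\to e_{i_b}\to e_{i_c}$ with $a<b<c$ involves at most one Ext, and Ext$^2=0$ never intervenes. I will verify that these compositions are nonzero: Hom$\circ$Hom between nested intervals sharing an endpoint, and Hom$\circ$Ext$^1$ (equivalently, pulling back or pushing out the one-dimensional extension). The composite then spans the one-dimensional space from $e_{i_a}$ to $e_{i_c}$. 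Therefore the irreducible elements (radical modulo radical squared in the Yoneda algebra) at $p$ are exactly the consecutive pairs $e_{i_k}\to e_{i_{k+1}}$, which are the arrows of Definition~\ref{def: F}. Every other nonzero space from Step 1 lies at some common vertex $p$, so it is a composite of these arrows.

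\textbf{Step 3 (relations).} Consider a length-two path $e_i\to e_j\to e_k$ in $Q_T$ whose arrows come from vertices $p\ne q$. Then $p$ and $q$ are the two endpoints of $e_j$. Since $T$ is a tree, $e_i$ and $e_k$ cannot share an endpoint: if $e_i,e_j,e_k$ were distinct edges that pairwise shared endpoints, they would form a triangle or meet at a single vertex, and neither happens here. So $e_i$ and $e_k$ are disjoint chords. By Step 1, all maps and extensions from $e_i$ to $e_k$ vanish, and the composite is a relation. Conversely, when $p=q$ the composite is nonzero by Step 2. This is exactly the relation rule of Definition~\ref{def: F}. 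Together with Steps 1 and 2, it identifies $(Q^\chi,R^\chi)$ with $F(T(\chi))$, using \cite[Definitions 3.1 and 3.3]{igusa2025hom}.

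\textbf{Expected obstacle.} I expect the main difficulty to be the sign and orientation bookkeeping in Steps 1 and 2. The paper reverses the labeling between Section~\ref{sec1} (counterclockwise) and Section~\ref{subsec: F} (clockwise). I will first check the identification on the $A_3$ examples of Figure~\ref{fig:F-construction-A3} to fix the conventions, then verify the nonvanishing of Hom$\circ$Ext$^1$ composites by an explicit pullback computation.
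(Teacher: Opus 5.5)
Your proposal is correct and has the same structure as the paper's proof. Both identify vertices via Araya's bijection, take arrows from chords that are consecutive at a shared polygon vertex, realize non-consecutive classes as composites, and decide relations by whether the two arrows arise at the same endpoint of the middle chord, using that a tree has no triangles. The difference is only in detail: you check the one-dimensionality, the earlier-to-later orientation (which does hold with the clockwise labeling used to define $F$), and the nonvanishing of the $\mathrm{Hom}\circ\mathrm{Hom}$ and $\mathrm{Hom}\circ\mathrm{Ext}^1$ composites at a common vertex, using explicit interval-module formulas and a pullback/pushout computation. The paper instead simply asserts these facts from Araya's lemma and the claim that the class factors through the separating chord.
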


\begin{proof}
By Araya's bijection, the objects of \(\chi\) are identified with the edges of the non-crossing spanning tree \(T(\chi)\). Thus the vertex sets of \((Q^\chi,R^\chi)\) and \(F(T(\chi))\) agree.

We compare arrows. Let \(e\) and \(f\) be two edges of \(T(\chi)\), corresponding to indecomposables \(X_e\) and \(X_f\).

If \(e\) and \(f\) do not share a polygon endpoint, then the corresponding chords are disjoint. By Araya's definition, defined in Section \ref{subsec: geom models}, the pair is exceptional in both orders, so there is no irreducible Hom or Ext arrow between them.

Now suppose $e$ and $f$ share a polygon endpoint $p$. If they are not consecutive at $p$, then there is an edge $g$ of $T(\chi)$ between them in the cyclic order at $p$. Write $e = c_{p,i}$, $f = c_{p,j}$, and $g = c_{p,k}$ with $k$ strictly between $i$ and $j$ counterclockwise at $p$. Using Araya's bijection we know that of the four spaces $\Hom_\Lambda(X_e,X_f)$, $\Hom_\Lambda(X_f,X_e)$, $\Ext^1_\Lambda(X_e,X_f)$ and $\Ext^1_\Lambda(X_f,X_e)$ exactly one is nonzero, and it is one-dimensional. The nonzero one factors through $X_g$, since the chord $c_{p,k}$ separates $c_{p,i}$ from $c_{p,j}$ at $p$. Since $X_g \in \chi$, this class is reducible in the Hom-Ext quiver of $\chi$, and contributes no arrow.

If \(e\) and \(f\) are consecutive at \(p\), then there is no intermediate edge at \(p\). Araya's definition gives exactly one nonzero Hom or Ext direction between the two objects, and this class cannot factor through another object of \(\chi\). Therefore it gives one irreducible arrow. With the chosen orientation of the polygon, this arrow is precisely the arrow from the earlier edge to the later edge in the counterclockwise local order at \(p\). Hence the arrows agree with those of \(F(T(\chi))\).

Now we identify the relations. By \cite{igusa2025hom}, the relations in the Hom-Ext quiver are generated by length-two compositions of irreducible arrows that vanish. Consider a length-two path
\[
e \longrightarrow f \longrightarrow g
\]
in $(Q^\chi, R^\chi)$. The arrow $e \to f$ arises at a polygon vertex $p$ shared by $e$ and $f$, and the arrow $f \to g$ arises at a polygon vertex $q$ shared by $f$ and $g$, so $p$ and $q$ are endpoints of $f$. Since $T(\chi)$ is a tree it contains no $3$-cycle, so $c_e$ and $c_g$ share a polygon endpoint if and only if $p = q$.

If $p = q$, then $e$ and $g$ are chords at $p$ separated by $f$ in the local order there. As in the discussion of arrows above, the nonzero class from $X_e$ to $X_g$ factors through $X_f$. Every class from $X_e$ to $X_f$ is a multiple of the arrow $e \to f$, and every class from $X_f$ to $X_g$ a multiple of the arrow $f \to g$, so if the composite of these two arrows were zero, no nonzero class from $X_e$ to $X_g$ could factor through $X_f$. Thus, the composite $X_e \to X_f \to X_g$ is nonzero.

If $p \neq q$, then $c_e$ and $c_g$ are disjoint, so by Araya's bijection $\Hom_\Lambda(X_e,X_g) = \Ext^1_\Lambda(X_e,X_g) = 0$. The composite $X_e \to X_f \to X_g$ must lie in one of these two spaces and hence vanishes. So the vanishing length-two paths are exactly those with $p \neq q$, which are the relations of $F(T(\chi))$ by Definition~\ref{def: F}.

Therefore
\[
(Q^\chi,R^\chi) \cong F(T(\chi)).
\]
\end{proof}

\subsection{Boundary Leaf Edges}

\begin{defn}
A \textbf{boundary leaf edge} of \(T\) is an edge
\[
s=\{i,i+1\}
\]
of \(T\) such that one endpoint of \(s\) is incident to no other edge of \(T\).
\label{defn: boundary leaf edge}
\end{defn}

\begin{lem}\label{lem:boundary-leaf-exists}
Every non-crossing spanning tree in a polygon with at least three vertices has at least two boundary leaf edges.
\end{lem}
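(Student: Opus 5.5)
Our plan is to argue by strong induction on the number $N\ge 3$ of polygon vertices, using the fact that a non-crossing spanning tree is cut by any interior chord into two smaller non-crossing spanning trees on sub-polygons. A boundary leaf edge is a boundary chord $\{i,i+1\}$ of $T$ with one endpoint of degree one in $T$. The base case $N=3$ is immediate: a spanning tree of a triangle consists of two sides sharing a vertex, both are boundary chords, and each has an endpoint of degree one.

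For the inductive step, first suppose $T$ contains no interior chord. Then all $N-1$ edges are boundary chords, so $T$ is the polygon boundary with exactly one side removed, namely a Hamiltonian path $j+1, j+2,\dots, j$ around the polygon. Its two end edges $\{j+1,j+2\}$ and $\{j-1,j\}$ are boundary chords whose outer endpoints have degree one. Since $N\ge 3$, these are two distinct edges, and we are done.

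Otherwise $T$ has an interior chord $c=\{a,b\}$. This chord splits the polygon into two convex sub-polygons $P_1$ (vertices $a,a+1,\dots,b$) and $P_2$ (vertices $b,\dots,a$), each with at least three vertices. Because no edge of $T$ crosses $c$, every edge of $T$ lies in $P_1$ or $P_2$. The restriction $T_k$ of $T$ to $P_k$ is then a non-crossing spanning tree of $P_k$: it contains $c$, it is connected (a path in $T$ between vertices of $P_k$ leaving $P_k$ would have to pass through $a$ or $b$, and can be rerouted through $c$ by acyclicity), and it is spanning. By induction, each $T_k$ has at least two boundary leaf edges.

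The key step, which we expect to be the main obstacle, is showing that at least one of these boundary leaf edges in each $T_k$ is not $c$ and remains a boundary leaf edge of $T$. A boundary chord of $P_k$ is either $c$ itself or a side of the original polygon. Since $T_k$ has at least two boundary leaf edges, at least one of them, call it $s_k$, is different from $c$, and so $s_k$ is a side of the original polygon. The leaf endpoint of $s_k$ has degree one in $T_k$. If that endpoint is neither $a$ nor $b$, its degree in $T$ equals its degree in $T_k$, because all edges at an interior vertex of $P_k$ lie in $P_k$. If the endpoint were $a$ or $b$, it would be incident to both $c$ and $s_k$ in $T_k$, giving degree at least two, which is a contradiction. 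Hence $s_1$ and $s_2$ are boundary leaf edges of $T$. They are distinct because they lie in different sub-polygons and neither equals $c$, which completes the induction.
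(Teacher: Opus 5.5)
Your proposal is correct and follows essentially the same route as the paper: induction on the number of vertices, the all-boundary-edges case handled as a path, and otherwise splitting along an interior chord $c$ and choosing in each half a boundary leaf edge different from $c$. Your extra check, that the leaf endpoint of $s_k$ cannot be $a$ or $b$ and so has the same degree in $T$ as in $T_k$, supplies the justification behind the paper's ``Hence $s_i$ is a boundary leaf edge of $T$'', which the paper states without proof.
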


\begin{proof}
We argue by induction on the number $n$ of polygon vertices.

A non-crossing spanning tree of a triangle consists of two of its three boundary edges. Each of these two edges has an endpoint incident to no other edge of $T$, so both are boundary leaf edges.

Suppose now that the claim holds for all polygons with fewer than $n$ vertices, and let $T$ be a non-crossing spanning tree of an $n$-gon $P$.

If $T$ consists entirely of boundary edges of $P$, then $T$ is a path along the boundary of $P$, and its two end edges are boundary leaf edges.

Otherwise, $T$ contains an internal chord $e = \{a, b\}$ of $P$. The chord $e$ divides $P$ into two sub-polygons $P_1$ and $P_2$, each having fewer than $n$ vertices and containing $e$ as a boundary edge. Since $T$ is non-crossing, the edges of $T$ contained in $P_i$, together with $e$, form a non-crossing spanning tree $T_i$ of $P_i$ for $i = 1, 2$.

By the inductive hypothesis, each $T_i$ has at least two boundary leaf edges. Note that $e$ is a boundary edge of $P_i$, so $e$ itself may not be a boundary leaf edge of $T_i$. However, at most one of the two guaranteed boundary leaf edges of $T_i$ can equal $e$, so each $T_i$ has at least one boundary leaf edge $s_i \neq e$. Each such $s_i$ is a boundary edge of the original polygon $P$. Hence $s_i$ is a boundary leaf edge of $T$.

Thus $s_1$ and $s_2$ are two distinct boundary leaf edges of $T$ and the result follows by induction.
\end{proof}

\begin{lem}\label{lem:degree-one-boundary-leaf}
Let \(s\) be an edge of \(T\). Then \(s\) is a boundary leaf edge if and only if the corresponding vertex of \(Q_T\) has degree \(1\). 
\end{lem}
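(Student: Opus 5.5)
The plan is to compute the degree of a vertex of $Q_T$ directly from Definition~\ref{def: F}, and then to translate the condition ``degree $1$'' into the geometry of the polygon. Throughout I assume $n+1\ge 3$, so that $T$ has at least two edges and is connected.

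\textbf{Degree formula.} Fix an edge $s=\{a,b\}$ of $T$. Every arrow of $Q_T$ at $s$ comes from one of the endpoints $a$ or $b$. At an endpoint $p$, the edges incident to $p$ are listed in the counterclockwise order $e_{i_1},\dots,e_{i_r}$. Because the polygon is convex, every edge at $p$ lies inside the interior angle of $P_{n+1}$ at $p$. So this order is a genuine linear order, running from the edge nearest one boundary side at $p$ to the edge nearest the other. If $s$ sits in position $k$ of this list, it picks up $[k>1]+[k<r]$ arrows at $p$. This number is:
\begin{itemize}
\item $0$ if $s$ is the only edge at $p$;
\item $1$ if $r\ge2$ and $s$ is first or last at $p$;
\item $2$ otherwise.
\end{itemize}
The degree of $s$ is the sum of these contributions at $a$ and at $b$. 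Since $T$ is connected with at least two edges, $a$ and $b$ cannot both have $s$ as their only edge. Hence $s$ has degree $1$ exactly when one endpoint, say $a$, is a leaf of $T$ and $s$ is extreme (first or last) at the other endpoint $b$.

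\textbf{Boundary leaf edge $\Rightarrow$ degree $1$.} Let $s=\{i,i+1\}$ be a boundary leaf edge with, say, $i$ a leaf. Then $s$ is the only edge at $i$. At $i+1$ there is at least one other edge, since $T$ is connected and spans at least $3$ vertices. Moreover $s$ lies along a boundary side of the angle at $i+1$, so it is extreme in the linear order there. The degree formula then gives degree $1$.

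\textbf{Degree $1$ $\Rightarrow$ boundary leaf edge.} Suppose $a$ is a leaf of $T$ and $s$ is extreme at $b$. I must show that $s$ is a boundary edge. Suppose instead that $s$ is an interior chord. Then it splits the polygon into two sides, each containing polygon vertices other than $a$ and $b$. Since $s$ is extreme at $b$, all other edges at $b$ lie on one side. Pick a polygon vertex $v$ strictly on the other side. Its path in $T$ to a vertex on the first side must leave the region cut off by $s$. Since $T$ is non-crossing, the path can only do so through $a$ or $b$, and can only reach $b$ via an edge on $v$'s side. But $a$ has no edge other than $s$, and $b$ has none on $v$'s side. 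This contradicts $T$ being spanning and connected. Hence $s$ is a boundary edge $\{i,i+1\}$ with a leaf endpoint, which is exactly a boundary leaf edge as in Definition~\ref{defn: boundary leaf edge}.

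\textbf{Main obstacle.} The delicate step is this last separation argument. The fact needed is that a non-crossing path in $T$ from one side of $s$ to the other must pass through $a$ or $b$. I would justify it from convexity: any chord joining vertices on opposite sides of $s$ crosses $s$ in the interior unless it shares an endpoint with $s$. Once that is settled, the rest is bookkeeping with the degree formula.
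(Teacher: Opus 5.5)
Your proposal is correct and follows essentially the same route as the paper: you compute the contribution ($0$, $1$, or $2$) of each endpoint to the degree of $s$ in $Q_T$, and you rule out an interior chord in the converse by a separation argument showing the non-leaf endpoint would need tree edges on both sides of $s$ (you phrase it contrapositively, the paper directly). Your version is somewhat more careful than the paper's. You make explicit that the local order at a vertex is linear by convexity, that $n+1\ge 3$ is needed, and why a path must pass through $a$ or $b$.
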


\begin{proof}
Let \(s=\{p,q\}\). At an endpoint \(p\), the contribution of \(s\) to the degree of the corresponding vertex in \(Q_T\) is \(0\) if no other tree edge is incident to \(p\); \(1\) if \(s\) is first or last in the local order of edges incident to \(p\); and \(2\) if \(s\) lies between two other incident edges in that local order.

Suppose \(s\) is a boundary leaf edge. One endpoint of \(s\) is incident to no other edges, contributing \(0\). The other endpoint must be either first or last in the local cyclic order of incident tree edges, contributing \(1\). Hence \(s\) has degree \(1\).

Suppose instead that \(s\) has degree \(1\). Then one endpoint of \(s\) contributes \(0\), so that endpoint is incident to no other tree edge. Call the endpoint incident to no other tree edge \(p\), and call the other endpoint \(q\). Suppose for contradiction that \(s\) is an internal chord. The chord \(s\) separates the polygon into two open regions. Since \(p\) is incident only to \(s\), the polygon vertices in each region must connect to the rest of \(T\) through \(q\) without crossing \(s\). Hence \(q\) is incident to tree edges in both regions, so \(s\) lies between two other incident edges in the local cyclic order at \(q\). This contributes \(2\) to the degree, contradicting degree \(1\). Therefore \(s\) is a boundary edge, and since one of its endpoints is incident to no other tree edge, it is a boundary leaf edge.
\end{proof}

\begin{lem}\label{lem:boundary-leaf-detect}
Let
\[
\varphi:F(T)\to F(T')
\]
be an isomorphism of quivers with relations. Then there exists a boundary leaf edge \(s\subset T\) such that \(\varphi(s)\) is a boundary leaf edge of \(T'\).
\end{lem}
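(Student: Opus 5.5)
The plan is to reduce the statement to a degree count. Lemma~\ref{lem:degree-one-boundary-leaf} characterizes boundary leaf edges purely in terms of the quiver $Q_T$: an edge $s$ of $T$ is a boundary leaf edge exactly when the corresponding vertex of $Q_T$ has degree $1$. The degree of a vertex (in-degree plus out-degree, counting arrows) is invariant under any isomorphism of quivers. It is therefore preserved in particular by an isomorphism of quivers with relations $\varphi\colon F(T)\to F(T')$, since such a map restricts to an isomorphism of the underlying quivers $Q_T\to Q_{T'}$.

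The key steps, in order, are as follows.
\begin{enumerate}
\item Assume first that the polygon has at least three vertices, i.e.\ $n\ge 2$. By Lemma~\ref{lem:boundary-leaf-exists}, $T$ has at least one (indeed two) boundary leaf edges; fix one and call it $s$.
\item By the forward direction of Lemma~\ref{lem:degree-one-boundary-leaf}, the vertex $s$ of $Q_T$ has degree $1$.
\item Since $\varphi$ is a bijection on vertices and on arrows compatible with sources and targets, $\varphi(s)$ has degree $1$ in $Q_{T'}$.
\item By the reverse direction of Lemma~\ref{lem:degree-one-boundary-leaf}, applied to $T'$, the edge $\varphi(s)$ is a boundary leaf edge of $T'$.
\end{enumerate}

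The degenerate case $n=1$ must be handled separately, because there the degree characterization fails. The polygon is a single edge $\{0,1\}$, so $T=T'$ consists of this one edge. Its corresponding vertex has degree $0$, yet it is a boundary leaf edge, since each endpoint meets no other edge. Here $\varphi$ necessarily sends this edge to the unique edge of $T'$, which is again a boundary leaf edge.

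The only real point to verify is that both directions of Lemma~\ref{lem:degree-one-boundary-leaf} apply to $T$ and to $T'$ separately. This holds because both are non-crossing spanning trees of the same polygon $P_{n+1}$. I expect no genuine obstacle: the argument is just the transport of a quiver-intrinsic invariant across the isomorphism. The relations $R_T$ are not needed for this lemma; presumably they matter only later, to recover rotation data from $\varphi$.
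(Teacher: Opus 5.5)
Your proposal is correct and follows the paper's proof essentially step for step: pick a boundary leaf edge via Lemma~\ref{lem:boundary-leaf-exists}, note it has degree $1$ in $Q_T$ by Lemma~\ref{lem:degree-one-boundary-leaf}, transport the degree across $\varphi$, and apply the converse of Lemma~\ref{lem:degree-one-boundary-leaf} in $T'$. Your separate treatment of $n=1$ is a harmless extra precaution; the paper tacitly assumes at least three polygon vertices, which is all the induction in Theorem~\ref{thm:comb} uses.
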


\begin{proof}
By Lemma~\ref{lem:boundary-leaf-exists}, choose a boundary leaf edge \(s\subset T\). By Lemma~\ref{lem:degree-one-boundary-leaf}, the vertex \(s\) has degree \(1\) in \(Q_T\).

Since \(\varphi\) is a quiver isomorphism, \(\varphi(s)\) has degree \(1\) in \(Q_{T'}\). By Lemma~\ref{lem:degree-one-boundary-leaf} again, \(\varphi(s)\) is a boundary leaf edge of \(T'\).
\end{proof}

\subsubsection{Deleting a Boundary Leaf Edge}

\begin{defn}
Let \(s\) be a boundary leaf edge of \(T\) and let \(x\) be the endpoint of \(s\) incident to no other edge of \(T\). Define \(T/s\) by deleting the polygon vertex \(x\) and the tree edge \(s\), then relabeling the remaining polygon vertices cyclically as in Figure~\ref{fig:deletion-reinsertion}. Then \(T/s\) is a non-crossing spanning tree of an \(n\)-gon.
\label{defn: deletion}
\end{defn}

\begin{figure}[h]
\centering
\begin{tikzpicture}[scale=1.0,
  polyvert/.style={circle, fill=black, inner sep=1.3pt},
  polyedge/.style={gray!60, thin},
  treeedge/.style={thick, black},
  leafedge/.style={thick, red},
  neighboredge/.style={thick, ForestGreen},
  newedge/.style={thick, blue}]

\begin{scope}[shift={(0,0)}]
  \foreach \i in {0,...,4} {
    \node[polyvert] (T\i) at (90 - 72*\i:1) {};
    \node at (90 - 72*\i:1.25) {\small $\i$};
  }
  \foreach \i/\j in {0/1,1/2,2/3,3/4,4/0} {
    \draw[polyedge] (T\i) -- (T\j);
  }
  \draw[leafedge] (T0) to[bend left=30] node[midway, above left, black]
    {\small $s$} (T4);
  \draw[treeedge] (T4) -- (T1);
  \draw[treeedge] (T1) -- (T3);
  \draw[treeedge] (T3) to[bend left=30] (T2);
  \node at (0,-1.55) {\small $T$};
\end{scope}

\draw[->, thick] (1.5,0) -- (3,0) node[midway, above] {\small Delete $s$};

\begin{scope}[shift={(4.5,0)}]
  \node[polyvert] (S1) at (45 - 90*1:1) {};
  \node at (45 - 90*1:1.25) {\small $2$};
  \node[polyvert] (S2) at (45 - 90*2:1) {};
  \node at (45 - 90*2:1.25) {\small $3$};
  \node[polyvert] (S3) at (45 - 90*3:1) {};
  \node at (45 - 90*3:1.25) {\small $4$};
  \node[polyvert] (S4) at (45 - 90*4:1) {};
  \node at (45 - 90*4:1.25) {\small $1$};
  \foreach \i/\j in {1/2,2/3,3/4,4/1} {
    \draw[polyedge] (S\i) -- (S\j);
  }
  \draw[treeedge] (S3) to[bend right=30]  (S4);
  \draw[treeedge] (S4) -- (S2);
  \draw[treeedge] (S2) to[bend left=30] (S1);
\end{scope}

\draw[->, thick] (6,0) -- (7.5,0) node[midway, above] {\small Relabel};

\begin{scope}[shift={(9,0)}]
  \foreach \i in {0,...,3} {
    \node[polyvert] (S\i) at (45 - 90*\i:1) {};
    \node at (45 - 90*\i:1.25) {\small $\i$};
  }
  \foreach \i/\j in {0/1,1/2,2/3,3/0} {
    \draw[polyedge] (S\i) -- (S\j);
  }
  \draw[treeedge] (S3) to[bend right=30]  (S0);
  \draw[treeedge] (S0) -- (S2);
  \draw[treeedge] (S2) to[bend left=30] (S1);
  \node at (0,-1.55) {\small $T/s$};
\end{scope}

\end{tikzpicture}
\caption{Deletion of the boundary leaf edge $s$ (red) from $T$ removes the  vertex $0$, and the remaining vertices are relabeled to preserve cyclic order, giving $T/s$.}
\label{fig:deletion-reinsertion}
\end{figure}

\begin{lem}\label{lem:deletion-compatible}
Let \(s\) be a boundary leaf edge of \(T\). Then
\[
F(T/s)
\]
is obtained from \(F(T)\) by deleting the vertex \(s\) and its unique incident arrow, and by keeping all remaining arrows and relations.
\end{lem}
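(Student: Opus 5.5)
We will compare $F(T)$ and $F(T/s)$ directly from Definition~\ref{def: F}, using the identification of the edges of $T/s$ with the edges of $T$ other than $s$. Write $s=\{x,y\}$, where $x$ is the endpoint incident to no other edge of $T$.

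\textbf{Relabeling.} Deleting $x$ and relabeling the remaining polygon vertices cyclically is a bijection from the polygon vertices other than $x$ onto the vertices of the $n$-gon. It preserves the cyclic order of these vertices and the incidences of the remaining edges. The key observation is that, for a non-crossing configuration, the counterclockwise order of the edges incident to a vertex $p$ is determined by the cyclic order of their other endpoints read from $p$. Since relabeling preserves that cyclic order, the local order of tree edges at every polygon vertex $p\neq x$ is the same in $T/s$ as in $T$, once $s$ is removed.

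\textbf{Arrows.} At $x$, only $s$ is incident, so $x$ contributes no arrows to $Q_T$. At every vertex $p\neq y$, the list of incident edges is unchanged, so it contributes the same arrows to $Q_T$ and $Q_{T/s}$. At $y$, the edge $s$ is a boundary edge, so it lies between the polygon boundary and the other edges at $y$. It is therefore first or last in the local order at $y$. This is exactly the content of Lemma~\ref{lem:degree-one-boundary-leaf}, which says $s$ has degree $1$ in $Q_T$. Removing $s$ from the chain $e_{i_1}\to\cdots\to e_{i_r}$ at $y$ deletes precisely one end arrow and leaves the others unchanged. Hence $Q_{T/s}$ is $Q_T$ with the vertex $s$ and its unique arrow removed.

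\textbf{Relations.} A length-two path in $Q_{T/s}$ is a length-two path in $Q_T$ avoiding $s$. The polygon vertices $p$ and $q$ at which its two arrows arise are the same in both quivers, because each arrow was produced at the same vertex by the argument above. So the condition $p\neq q$ of Definition~\ref{def: F} holds in $T/s$ exactly when it holds in $T$, and the relations of $F(T/s)$ are the relations of $F(T)$ not involving $s$. Because $s$ has degree one, no length-two path through $s$ has $s$ in the middle. Any relation involving $s$ therefore has $s$ as an endpoint, and it disappears when $s$ is deleted.

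The main obstacle is making the local-order preservation rigorous: that deleting $x$ and relabeling changes neither the counterclockwise order at any vertex nor the identity of the vertex at which each arrow arises. We will handle this by expressing the local order at $p$ purely in terms of the cyclic order of the neighbors of $p$ in $T$, which relabeling respects. With that established, the remaining steps are bookkeeping.
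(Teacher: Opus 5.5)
Your proof is correct and follows essentially the same route as the paper. The endpoint $x$ contributes no arrows, and $s$ is first or last at $y$, so removing it deletes exactly one end arrow without creating new consecutive pairs. Relations are preserved because each arrow still arises at the same polygon vertex and the cyclic order is unchanged. Your explicit argument that relabeling preserves local orders, via the cyclic order of the other endpoints, is just a more careful version of the paper's bare assertion that all other local orders are unchanged.
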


\begin{proof}
Let $s = \{i,j\}$, where $i$ is the endpoint incident to no other edges. The endpoint $i$ contributes no arrows to \(F(T)\). At the other endpoint $j$, the edge \(s\) is first or last in the counterclockwise order of incident tree edges, since \(s\) is a boundary leaf edge. Hence \(s\) is adjacent to exactly one other edge in the local order, so \(s\) contributes exactly one arrow.

Deleting \(s\) removes this arrow. Since \(s\) was first or last in the local order, deleting it does not make two previously nonconsecutive edges become consecutive. Thus no new arrow is created.

All other polygon vertices and all other local orders are unchanged. Therefore all remaining arrows are exactly the arrows of \(F(T/s)\).

Every relation involving \(s\) is removed. Any relation not involving \(s\) depends only on the cyclic order of the polygon vertices appearing in the corresponding length-two path. Deleting a boundary leaf edge removes one boundary vertex but does not change the cyclic order of the remaining vertices. Therefore all such relations are preserved.

Hence \(F(T/s)\) is obtained from \(F(T)\) by deleting \(s\) and its unique incident arrow.
\end{proof}

\begin{lem}\label{lem:induced-isomorphism-after-deletion}
Let
\[
\varphi:F(T)\to F(T')
\]
be an isomorphism of quivers with relations. Suppose \(s\subset T\) is a boundary leaf edge and \(s'=\varphi(s)\) is a boundary leaf edge of \(T'\). Then \(\varphi\) induces an isomorphism
\[
F(T/s)\cong F(T'/s').
\]
\end{lem}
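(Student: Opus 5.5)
The plan is to use Lemma~\ref{lem:deletion-compatible} on both sides and restrict \(\varphi\). First, Lemma~\ref{lem:deletion-compatible} identifies \(F(T/s)\) with the full subquiver of \(F(T)\) on the vertex set \(Q_T\setminus\{s\}\). Its relations are exactly the relations of \(F(T)\) that do not involve \(s\). In the same way, \(F(T'/s')\) is the full subquiver of \(F(T')\) on \(Q_{T'}\setminus\{s'\}\), carrying the relations of \(F(T')\) that avoid \(s'\). Here we use the natural identification of the edges of \(T/s\) with the edges of \(T\) other than \(s\), which is given by the relabeling in Definition~\ref{defn: deletion}.

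Next, since \(\varphi(s)=s'\) and \(\varphi\) is a bijection on vertices, \(\varphi\) restricts to a bijection \(Q_T\setminus\{s\}\to Q_{T'}\setminus\{s'\}\). Because \(\varphi\) is a quiver isomorphism, it sends arrows between vertices other than \(s\) bijectively to arrows between vertices other than \(s'\). The unique arrow incident to \(s\) is sent to the unique arrow incident to \(s'\); this uses Lemma~\ref{lem:degree-one-boundary-leaf}, which gives both vertices degree \(1\). So the restriction is an isomorphism of the underlying full subquivers.

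The step needing the most care is the relations. The relations of \(F(T)\) are generated by length-two paths, as in Definition~\ref{def: F}. A length-two path avoiding \(s\) in \(F(T)\) is sent by \(\varphi\) to a length-two path avoiding \(s'\). Since \(\varphi\) is an isomorphism of quivers with relations, it carries the set of zero length-two paths onto the set of zero length-two paths. We should argue this directly: \(\varphi\) maps the ideal \(R_T\) onto \(R_{T'}\), and in these monomial ideals generated by length-two paths, a length-two path lies in the ideal if and only if it is one of the generators. Restricting to paths avoiding \(s\) and \(s'\) then shows that \(\varphi\) matches the generating relations of \(F(T/s)\) with those of \(F(T'/s')\). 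Hence the restricted map is an isomorphism \(F(T/s)\cong F(T'/s')\).

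The main potential obstacle is this bookkeeping: making sure that ``isomorphism of quivers with relations'' is read as matching the generating length-two zero relations, and that no relation of \(F(T/s)\) is lost or created by the deletion. Lemma~\ref{lem:deletion-compatible} already guarantees that no relation is lost or created, so this reduces to the remark above about monomial ideals.
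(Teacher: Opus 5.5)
Your proposal is correct and follows the paper's own argument: apply Lemma~\ref{lem:deletion-compatible} to both $T$ and $T'$, then restrict $\varphi$, using $\varphi(s)=s'$ to match the vertices, arrows, and relations that avoid $s$ with those that avoid $s'$. Your extra remark, that a length-two path lies in the monomial ideal exactly when it is one of the generating relations, only spells out a step the paper states in one line.
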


\begin{proof}
By Lemma~\ref{lem:deletion-compatible}, \(F(T/s)\) is obtained from \(F(T)\) by deleting the vertex \(s\) and its unique incident arrow. Likewise, \(F(T'/s')\) is obtained from \(F(T')\) by deleting \(s'\) and its unique incident arrow.

Since \(\varphi(s)=s'\), the isomorphism \(\varphi\) restricts to an isomorphism on the remaining vertices and arrows. It also preserves all relations not involving \(s\), because \(\varphi\) is an isomorphism of quivers with relations.

Therefore \(\varphi\) induces
\[
F(T/s)\cong F(T'/s').
\]
\end{proof}

\subsection{Reinsertion Lemma}

\begin{defn}
Let \(S\) be a non-crossing spanning tree in an \(n\)-gon. A \textbf{boundary leaf reinsertion} of \(S\), between vertices $i$ and $i+1$, is the operation of inserting a new polygon vertex \(x\) between \(i\) and \(i+1\), and adding exactly one of the two new boundary edges \(\{i,x\}\) or \(\{x,i+1\}\) to the tree. The added edge is denoted \(s\), and the resulting tree \(T\) is again a non-crossing spanning tree.
\label{defn: reinsertion}
\end{defn}

\begin{lem}\label{lem:reinsertion-unique}
Let \(T_1, T_2\) be non-crossing spanning trees in the \((n+1)\)-gon, with boundary leaf edges \(s_1\) and \(s_2\) respectively, and suppose \(S := T_1/s_1 = T_2/s_2\) as trees in the \(n\)-gon. Let
\[
\psi: F(T_1) \to F(T_2)
\]
be an isomorphism of quivers with relations such that \(\psi(s_1) = s_2\) and the induced map on \(F(S)\) is the identity. Then \(T_1 = T_2\); in particular \(s_1 = s_2\).
\end{lem}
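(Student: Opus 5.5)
The plan is to show that, once $S$ and the identification of $F(S)$ inside both $F(T_1)$ and $F(T_2)$ are fixed, the position of the reinserted leaf is determined by the local data of $F(T_i)$ near $s_i$. More precisely, the unique arrow at $s_i$, together with the relations through it, determines the gap between consecutive polygon vertices of $S$ into which the new vertex $x_i$ was inserted, and also which of the two boundary edges $\{k,x_i\}$ or $\{x_i,k+1\}$ was added. Since $T_i$ is recovered from $S$ by a boundary leaf reinsertion (Definition~\ref{defn: reinsertion}), knowing the gap and the choice of edge recovers $T_i$ completely. It therefore suffices to show that this data agrees for $i=1,2$.

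First we record the local picture of $s_i$ in $F(T_i)$. Write $s_i=\{x_i,j_i\}$, where $x_i$ is the new leaf vertex and $j_i$ is a vertex of $S$. By the proof of Lemma~\ref{lem:deletion-compatible}, $s_i$ sits first or last in the counterclockwise order at $j_i$. Hence $s_i$ has exactly one arrow, joining it to the edge $e_i$ of $S$ that is adjacent to it in the local order at $j_i$. Concretely, if $s_i=\{k,x_i\}$ with $x_i$ in the gap $(k,k+1)$, then $s_i$ is inserted at vertex $k$ of $S$ in the angular slot facing the boundary side $\{k,k+1\}$. If $s_i=\{x_i,k+1\}$, it is inserted at $k+1$ in the slot facing $\{k,k+1\}$. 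These two cases give opposite arrow directions: in one the arrow points into $s_i$ from the edge $e_i$, in the other it points out of $s_i$ to $e_i$, by the right-hand rule of Remark~\ref{rem: rhr}. Since $\psi$ restricts to the identity on $F(S)$ and sends $s_1$ to $s_2$, the arrows match. So $e_1=e_2=:e$, and the arrow directions agree. In particular, $j_1$ and $j_2$ are both endpoints of $e$, and $s_1,s_2$ occupy the same extremal end (first, or last) of the local order at their respective vertices.

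The main obstacle is ruling out the case $j_1\neq j_2$, in which $e=\{j_1,j_2\}$ and the leaf is attached at opposite ends of $e$. For this we use relations. Any neighbour $g$ of $e$ in $F(S)$ reached from $e$ through an arrow at vertex $j$ forms a length-two path with $s_i$ and $e$. That path is zero exactly when $j\neq j_i$, by Definition~\ref{def: F}. Since $\psi$ is the identity on $F(S)$ and preserves relations, whenever such a $g$ exists it forces $j_1=j_2$. The degenerate case is when $e$ has no neighbour on the relevant side at either endpoint. Then $e$ is extremal at both $j_1$ and $j_2$, and we argue directly with the cyclic order. The slot at $j_1$ facing a boundary side and the slot at $j_2$ facing a boundary side must both be adjacent to $e$ in the correct rotational sense. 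By the planarity of the non-crossing tree $S$, this can happen only when the gap lies beside $e$ itself, i.e.\ $e$ is a boundary edge of $S$ and $x_i$ is inserted between its endpoints. In that case the two possible reinsertions are exactly $\{k,x\}$ and $\{x,k+1\}$, and these were already distinguished by the arrow direction. A small special case that needs checking separately is when $S$ has one edge ($n=2$).

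Once $j_1=j_2=j$ and the extremal slot at $j$ is identified, the gap $(k,k+1)$ is determined: it is the boundary gap bounding the angular sector at $j$ between $e$ and the polygon boundary on that side. The endpoint choice is also determined by the arrow direction. Hence $T_1=T_2$ and $s_1=s_2$.
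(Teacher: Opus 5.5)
Your route is the paper's. Matching the unique arrow at $s_i$ forces $e_1=e_2=e$ and the same extremal position; this is the paper's Step 1. A length-two path $g\to e\to s_i$, with $g\to e$ arising at an endpoint of $e$, is a relation in exactly one of $F(T_1)$, $F(T_2)$ when $j_1\neq j_2$; this is the paper's Step 2. That part is fine.

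The step that fails is your ``degenerate case''. Suppose $j_1\neq j_2$ and both $s_i$ are last at $j_i$; the other case is dual. Then the two reinsertions use the \emph{different} gaps $(j_1,j_1+1)$ and $(j_2,j_2+1)$ of $S$. Both produce an arrow $e\to s_i$, so the arrow direction says nothing about them. The configuration you reduce to has $e=\{k,k+1\}$, $x$ in the gap $(k,k+1)$, and $s$ equal to $\{k,x\}$ or $\{x,k+1\}$. As you yourself observed, these two choices have opposite arrow directions, so that configuration was never a surviving one. Your planarity claim therefore does not handle the case it is meant to handle, and ``already distinguished by the arrow direction'' disposes of nothing.

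The missing observation, which is how the paper closes the argument, is that the degenerate case is empty:
\begin{enumerate}
\item Since $s_i$ is last at $j_i$ and adjacent to $e$, the edge $e$ is the last edge of $S$ at \emph{both} endpoints. So every other edge of $S$ at $j_1$ or $j_2$ precedes $e$ there.
\item Once the $n$-gon has at least three vertices, $S$ is connected with at least two edges. So some endpoint of $e$, say $j_2$, carries another edge of $S$.
\item The edge $a$ immediately preceding $e$ at $j_2$ gives an arrow $a\to e$ arising at $j_2$.
\item Then $a\to e\to s_1$ is a relation in $F(T_1)$, while $a\to e\to s_2$ is not a relation in $F(T_2)$. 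This contradicts that $\psi$ preserves relations.
\end{enumerate}

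Finally, the case $S=\{e\}$ should be excluded rather than ``checked separately'', because there the conclusion is false. In the triangle, take $T_1=\{c_{0,1},c_{0,2}\}$ with $s_1=c_{0,1}$, and $T_2=\{c_{0,1},c_{1,2}\}$ with $s_2=c_{1,2}$. Then $T_1/s_1=T_2/s_2$, and each $F(T_i)$ is a single arrow $e\to s_i$. So a $\psi$ with $\psi(s_1)=s_2$ inducing the identity on $F(S)$ exists, yet $T_1\neq T_2$. Only the case $n\ge 3$ is needed in Theorem~\ref{thm:comb}.
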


\begin{proof}
Let \(p_j\) be the endpoint of \(s_j\) incident to no other edge of \(T_j\) and let \(q_j\) be the other endpoint of \(s_j\). By Lemma~\ref{lem:deletion-compatible}, \(F(T_j)\) is obtained from \(F(S)\) by adjoining the vertex \(s_j\) and one further arrow \(\alpha_j\), and the relations of \(F(T_j)\) not involving \(s_j\) are exactly those of \(F(S)\). Since \(p_j\) is incident to no other edge of \(T_j\), it contributes no arrow and \(\alpha_j\) arises at \(q_j\). Since \(s_j\) must be first or last in the counterclockwise order of tree edges incident to \(q_j\), it is adjacent there to exactly one other edge, denoted as \(e_j\), and \(\alpha_j\) runs \(e_j\to s_j\) if \(s_j\) is last in the counterclockwise order at \(q_j\) and \(s_j\to e_j\) if \(s_j\) is first. In the first case \(s_j\) is a sink of \(F(T_j)\), so every length-two path through \(s_j\) has the form \(a\to e_j\to s_j\) with \(a\to e_j\) an arrow of \(F(S)\), and by Definition~\ref{def: F} such a path is a relation exactly when \(a\to e_j\) arises at the endpoint of \(e_j\) other than \(q_j\). In the second case \(s_j\) is a source and the dual statement holds.

Since \(\psi\) sends \(s_1\) to \(s_2\) and induces the identity on \(F(S)\), it sends \(\alpha_1\) to \(\alpha_2\), so \(e_1 = e_2 =: e\). There are a priori four possible reinsertions (Figure~\ref{fig:reinsertion-cases}). The new vertex may be attached to either endpoint of \(e\), and for each endpoint it may be inserted on one of the two boundary sides adjacent to that endpoint. We eliminate three of these four cases.

\textbf{Step 1:} The arrow direction eliminates two reinsertions.  Since \(\psi\) sends \(s_1\) to \(s_2\), these two arrows have the same direction. Hence \(s_1\) is last at \(q_1\) if and only if \(s_2\) is last at \(q_2\), and inserting \(p_j\) into the other boundary side at a given endpoint would reverse the direction of \(\alpha_j\). This eliminates two of the four possibilities, leaving one candidate at each endpoint of \(e\).

\textbf{Step 2:} The relations at \(e\) eliminate one of the two remaining reinsertions. Assume \(s_j\) is last at \(q_j\), the case in which \(s_j\) is first at \(q_j\) being dual. Write \(e = \{u,v\}\), where by Step 1, one candidate attaches  at \(u\) and the other at \(v\), such that \(q_j \in \{u,v\}\) for \(j = 1,2\).

Suppose first that at one of these endpoints, say \(u\), some edge \(a'\) of \(S\) follows \(e\) in the counterclockwise order there. Attaching the new edge at \(u\) would place \(a'\) between the new edge and \(e\), so there would be no arrow between them (Panel (e) of Figure~\ref{fig:reinsertion-cases}). This contradicts \(e\) being adjacent to \(s_j\), so that candidate is eliminated, \(q_1 = q_2 = v\), and \(T_1 = T_2\) by Step 1. So we may assume \(e\) is the last edge of \(S\) in the counterclockwise order at both \(u\) and \(v\), so that both candidates produce an arrow between the new edge and \(e\).

Now suppose \(q_1 \neq q_2\), so that \(\{q_1,q_2\} = \{u,v\}\). One of these endpoints, say \(q_2\), must be incident to another edge of \(S\). Let \(a\) be the edge of \(S\) immediately preceding \(e\) in the counterclockwise order at \(q_2\), which exists because
\(e\) is last there. Then \(F(S)\) has an arrow \(a \to e\) arising at \(q_2\). The two arrows of the path \(a \to e \to s_1\) arise at \(q_2\) and at \(q_1\), which are distinct, so by Definition~\ref{def: F} this path is a relation in \(F(T_1)\). But both arrows of \(a \to e \to s_2\) arise at \(q_2\), so that path is not a relation in \(F(T_2)\). Further, \(\psi\) fixes \(a\) and \(e\) and sends \(\alpha_1\) to \(\alpha_2\),
so it sends the first path to the second, contradicting that \(\psi\) preserves relations. Hence \(q_1 = q_2\), and by Step 1 the two reinsertions insert the new vertex into the same boundary side there, so \(T_1 = T_2\).

\end{proof}

\begin{figure}[h]
\centering
\begin{tikzpicture}[scale=0.8,
  polyvert/.style={circle, fill=black, inner sep=1.2pt},
  polyedge/.style={gray!55, thin},
  treeedge/.style={thick, black},
  eedge/.style={thick, ForestGreen},
  aedge/.style={thick, orange!90!black},
  sedge/.style={thick, red},
  lbl/.style={font=\small},
  elbl/.style={font=\scriptsize, black},
  panelcap/.style={font=\bfseries\small},
  sub/.style={font=\footnotesize, align=center, text width=3.5cm}]

\begin{scope}[shift={(7,0)}]
  \node[polyvert] (p) at (45 + 90:1.1)  {};  \node[lbl] at (45 + 90:1.42)  {$u$};
  \node[polyvert] (q) at (45 - 0:1.1)   {};  \node[lbl] at (45 + 0:1.42)   {$v$};
  \node[polyvert] (r) at (45 - 90:1.1) {};  \node[lbl] at (45 - 90:1.42) {$r$};
  \node[polyvert] (t) at (45 +180:1.1) {};  \node[lbl] at (45 + 180:1.42) {$t$};

  \foreach \i/\j in {p/q,q/r,r/t,t/p} {\draw[polyedge] (\i) -- (\j);}

  \draw[eedge] (p) to[bend right=30] (q) node[pos=0.5, xshift=0pt, yshift=10pt, elbl] {$e$};
  \draw[treeedge] (q) -- (t);
  \draw[treeedge] (r) to[bend right=30] (t);

  \node[panelcap] at (0,-1.75) {(a) $S=T_j/s_j$};
\end{scope}

\begin{scope}[shift={(0,-4.5)}]
  \node[polyvert] (p) at (72+90:1.1)   {};  \node[lbl] at (72+90:1.42)   {$u$};
  \node[polyvert] (x) at (72+18:1.1)   {};  \node[lbl] at (72+18:1.5)    {$x$};
  \node[polyvert] (q) at (72-54:1.1)  {};  \node[lbl] at (72-54:1.48)  {$v$};
  \node[polyvert] (r) at (72-126:1.1) {};  \node[lbl] at (72-126:1.48) {$r$};
  \node[polyvert] (t) at (72+162:1.1)  {};  \node[lbl] at (72+162:1.48)  {$t$};

  \foreach \i/\j in {p/x,x/q,q/r,r/t,t/p} {\draw[polyedge] (\i) -- (\j);}

  \draw[sedge] (x) to[bend left=30] (p) node[pos=0.5, xshift=0pt, yshift=20pt, elbl] {$s$};
  \draw[eedge] (p) to[bend right=0] (q) node[pos=0.7, xshift=0pt, yshift=3pt, elbl] {$e$};
  \draw[treeedge] (q) -- (t);
  \draw[treeedge] (r) to[bend right=30] (t);

  \node[panelcap] at (0,-1.75) {(b)};
  \node[sub] at (0,-2.5) {Surviving reinsertion};
\end{scope}

\begin{scope}[shift={(5,-4.5)}]
  \node[polyvert] (p) at (72+90:1.1)   {};  \node[lbl] at (72+90:1.42)   {$u$};
  \node[polyvert] (x) at (72+18:1.1)   {};  \node[lbl] at (72+18:1.5)    {$x$};
  \node[polyvert] (q) at (72-54:1.1)  {};  \node[lbl] at (72-54:1.48)  {$v$};
  \node[polyvert] (r) at (72-126:1.1) {};  \node[lbl] at (72-126:1.48) {$r$};
  \node[polyvert] (t) at (72+162:1.1)  {};  \node[lbl] at (72+162:1.48)  {$t$};

  \foreach \i/\j in {p/x,x/q,q/r,r/t,t/p} {\draw[polyedge] (\i) -- (\j);}

  \draw[sedge] (x) to[bend right=30] (q) node[pos=0.5, xshift=0pt, yshift=20pt, elbl] {$s$};
  \draw[eedge] (p) to[bend right=0] (q) node[pos=0.7, xshift=0pt, yshift=3pt, elbl] {$e$};
  \draw[treeedge] (q) -- (t);
  \draw[treeedge] (r) to[bend right=30] (t);

  \node[panelcap] at (0,-1.75) {(c)};
  \node[sub] at (0,-2.5) {Step 1 - wrong arrow direction};
\end{scope}

\begin{scope}[shift={(10,-4.5)}]
  \node[polyvert] (p) at (0+90:1.1)   {};  \node[lbl] at (0+90:1.42)   {$u$};
  \node[polyvert] (q) at (0+18:1.1)   {};  \node[lbl] at (0+18:1.42)   {$v$};
  \node[polyvert] (r) at (0-54:1.1)  {};  \node[lbl] at (0-54:1.42)  {$r$};
  \node[polyvert] (t) at (0-126:1.1) {};  \node[lbl] at (0-126:1.42) {$t$};
  \node[polyvert] (x) at (0+162:1.1)  {};  \node[lbl] at (0+162:1.5)   {$x$};

  \foreach \i/\j in {p/q,q/r,r/t,t/x,x/p} {\draw[polyedge] (\i) -- (\j);}

  \draw[sedge] (x) to[bend right=30] (p) node[pos=0.5, xshift=-10pt, yshift=9pt, elbl] {$s$};
  \draw[eedge] (p) to[bend right=30] (q) node[pos=0.5, xshift=10pt, yshift=9pt, elbl] {$e$};
  \draw[treeedge] (q) -- (t);
  \draw[treeedge] (r) to[bend right=30] (t);

  \node[panelcap] at (0,-1.75) {(d)};
  \node[sub] at (0,-2.5) {Step 1 - wrong arrow direction};
\end{scope}

\begin{scope}[shift={(15,-4.5)}]
  \node[polyvert] (p) at (72+90:1.1)   {};  \node[lbl] at (72+90:1.42)   {$u$};
  \node[polyvert] (q) at (72+18:1.1)   {};  \node[lbl] at (72+18:1.42)   {$v$};
  \node[polyvert] (x) at (72-54:1.1)  {};  \node[lbl] at (72-54:1.5)   {$x$};
  \node[polyvert] (r) at (72-126:1.1) {};  \node[lbl] at (72-126:1.42) {$r$};
  \node[polyvert] (t) at (72+162:1.1)  {};  \node[lbl] at (72+162:1.42)  {$t$};

  \foreach \i/\j in {p/q,q/x,x/r,r/t,t/p} {\draw[polyedge] (\i) -- (\j);}

  \draw[sedge] (q) to[bend right=30] (x) node[pos=0.5, xshift=8pt, yshift=12pt, elbl] {$s$};
  \draw[eedge] (p) to[bend right=30] (q) node[pos=0.5, xshift=-17pt, yshift=6pt, elbl] {$e$};
  \draw[treeedge] (q) -- (t);
  \draw[treeedge] (r) to[bend right=30] (t);

  \node[panelcap] at (0,-1.75) {(e)};
  \node[sub] at (0,-2.5) {Step 2 - no arrow between s and e};
\end{scope}

\end{tikzpicture}
\caption{Example reinsertion of tree from Figure~\ref{fig:deletion-reinsertion}. Red arcs indicate the reinserted boundary leaf edge $s$ and green arcs its unique neighbor $e$. Panels (b)-(e) show the four ways to reinsert $s$ so that the new vertex $x$ is attached to an endpoint of $e$. Two are eliminated in Step~1 because they reverse the direction of the arrow at $s$, and one is eliminated in Step~2 because an edge of $S$ then separates $s$ from $e$, leaving no arrow between them.}
\label{fig:reinsertion-cases}
\end{figure}

\subsection{Inductive Reconstruction}

\begin{thm}\label{thm:comb}
Let \(T,T'\) be non-crossing spanning trees in the \((n+1)\)-gon. Then
\[
F(T)\cong F(T')
\]
as quivers with relations if and only if
\[
T'=\sigma^m(T)
\]
for some cyclic rotation \(\sigma^m\). Further, every isomorphism \(\varphi: F(T) \to F(T')\) is induced by such a rotation.
\end{thm}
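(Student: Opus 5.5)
The easy direction comes first. A rotation $\sigma^m$ of $P_{n+1}$ preserves the counterclockwise local order at every polygon vertex and preserves which polygon vertex an arrow arises at. Hence the induced bijection on tree edges, $e\mapsto\sigma^m(e)$, sends the arrows of $Q_T$ to those of $Q_{\sigma^m(T)}$. By Definition~\ref{def: F} it also sends relations to relations, since a relation is decided only by whether $p\neq q$. So $F(T)\cong F(\sigma^m(T))$.

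For the converse, together with the claim that every isomorphism is induced by a rotation, I will argue by induction on $n$. The base cases are small polygons: the triangle, or even $n=1$, where the tree is a single edge and the claim is immediate up to the choice of rotation.

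For the inductive step, let $\varphi:F(T)\to F(T')$ be an isomorphism.
\begin{enumerate}
\item By Lemma~\ref{lem:boundary-leaf-detect}, pick a boundary leaf edge $s\subset T$ with $s':=\varphi(s)$ a boundary leaf edge of $T'$.
\item By Lemma~\ref{lem:induced-isomorphism-after-deletion}, $\varphi$ induces an isomorphism $\bar\varphi:F(T/s)\to F(T'/s')$ of trees in the $n$-gon.
\item By the induction hypothesis (the strengthened form), $\bar\varphi$ is induced by a rotation $\tau$ of the $n$-gon, so $T'/s'=\tau(T/s)$ and $\bar\varphi$ is the edge map of $\tau$.
\end{enumerate}

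Next I lift $\tau$ to a rotation $\sigma^m$ of the $(n+1)$-gon. Choose $\sigma^m$ to carry the deleted vertex $x$ of $s$ to a position in $T'$ so that $\sigma^m(T)/\sigma^m(s)=T'/s'$ with the induced identification equal to $\tau$. Concretely, $\sigma^m$ is determined by where it sends the polygon vertices of $T/s$ after relabeling; the boundary gap between $i$ and $i+1$ containing $x$ goes to the gap $\tau$ dictates. Then $\psi:=\varphi\circ F(\sigma^{-m})$ is an isomorphism $F(\sigma^m(T))\to F(T')$ sending $\sigma^m(s)$ to $s'$ and inducing the identity on $F(T'/s')$. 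Lemma~\ref{lem:reinsertion-unique} now gives $\sigma^m(T)=T'$ and $\sigma^m(s)=s'$. Since $\psi$ fixes $s'$ and fixes every other vertex, $\psi$ is the identity, so $\varphi=F(\sigma^m)$ is induced by the rotation.

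The hard step is the lifting of $\tau$ to $\sigma^m$, together with the matching of the deleted vertex's gap. The relabeling in Definition~\ref{defn: deletion} shifts indices, and the deleted gap in $T$ might a priori be sent by the lift to a gap other than the one containing the deleted vertex of $s'$. Lemma~\ref{lem:reinsertion-unique}, however, assumes $S=T_1/s_1=T_2/s_2$ literally. My plan is to rotate $T'$ first, using the freedom in the easy direction, so that the deleted vertices of $s$ and $s'$ both become $0$. Then relabeling after deletion is canonical, $\tau$ is an honest rotation of the $n$-gon, and it lifts by fixing $0$ and acting on $\{1,\dots,n\}$ by the cyclic shift. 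I should check that this shift, viewed on $n+1$ points with $0$ fixed, is a rotation of the $(n+1)$-gon whenever it arises. If $\tau$ is nontrivial it is not, so the argument there needs Lemma~\ref{lem:reinsertion-unique} in a form allowing $\tau$. The fallback is to apply Lemma~\ref{lem:reinsertion-unique} to $\tau^{-1}$ applied to $T'$, with the reinsertion gap moved accordingly. This is legitimate because reinsertion with a new vertex in any gap is covered by Definition~\ref{defn: reinsertion}. The lemma then forces $s$ and $s'$ to lie in corresponding gaps, and one checks that this realizes the whole configuration as a single rotation.
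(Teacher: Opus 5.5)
Your proposal is correct and follows the paper's proof step for step: Lemma~\ref{lem:boundary-leaf-detect} gives a boundary leaf edge $s$ whose image $s'$ is again a boundary leaf edge, Lemma~\ref{lem:induced-isomorphism-after-deletion} and the strengthened induction hypothesis handle $F(T/s)$, the rotation is lifted, and Lemma~\ref{lem:reinsertion-unique} is applied to your $\psi$. The lifting subtlety you flag is genuine, and the paper handles it the same way, defining $\tilde\sigma^m$ by sending $x$ into the gap between $\sigma^m(i)$ and $\sigma^m(i+1)$; your fallback is the right reading, because the four-case analysis in Lemma~\ref{lem:reinsertion-unique} compares reinsertions into possibly different gaps and so forces that gap to be the one containing the leaf vertex of $s'$, after which the lift is an honest rotation (just start the induction at the triangle rather than $n=1$, since Step~2 of that lemma needs $S$ to have at least two edges).
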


\begin{proof}
The forward direction is immediate. A cyclic rotation preserves counterclockwise order, consecutive incidence at each polygon vertex, and the cyclic order condition defining the relations, so a rotation \(\sigma^m\) with \(T' = \sigma^m(T)\) induces an isomorphism \(F(T) \cong F(T')\).

For the converse, we argue by induction on \(n\), proving the stronger statement that every isomorphism \(\varphi: F(T) \to F(T')\) is induced by a cyclic rotation \(\sigma^m\) with \(T' = \sigma^m(T)\).

For the triangle, every non-crossing spanning tree has two edges, and all such trees are cyclic rotations of one another; the only isomorphisms between their \(F(T)\)'s are those induced by the rotations identifying them.

Let \(n \ge 3\), suppose the result holds for \(n\)-gons, and let \(\varphi:F(T)\to F(T')\) be an isomorphism, where \(T, T'\) are non-crossing spanning trees in the \((n+1)\)-gon.

By Lemma~\ref{lem:boundary-leaf-detect}, there exists a boundary leaf edge \(s \subset T\) such that \(s' := \varphi(s)\) is a boundary leaf edge of \(T'\). By Lemma~\ref{lem:induced-isomorphism-after-deletion}, \(\varphi\) induces an isomorphism
\[
\overline{\varphi}: F(T/s) \to F(T'/s')
\]
of quivers with relations on the \(n\)-gon.

By the inductive hypothesis applied to \(\overline{\varphi}\), there is a cyclic rotation \(\sigma^m\) of the \(n\)-gon with \(\sigma^m(T/s) = T'/s'\), and \(\overline{\varphi}\) is induced by \(\sigma^m\).

We lift \(\sigma^m\) to the \((n+1)\)-gon. Let \(x\) be the endpoint of \(s\) incident to no other edge of \(T\). Deleting \(x\) identifies the polygon vertices of the \((n+1)\)-gon other than \(x\), in cyclic order, with the polygon vertices of the \(n\)-gon, and under this identification \(x\) lies between two adjacent polygon vertices \(i\) and \(i+1\) of the \(n\)-gon. Let \(\tilde{\sigma}^m\) be the bijection on the polygon vertices of the \((n+1)\)-gon which agrees with \(\sigma^m\) under this identification and which sends \(x\) between \(\sigma^m(i)\) and \(\sigma^m(i+1)\). Then \(\tilde{\sigma}^m\) preserves the cyclic order, so it is a cyclic rotation of the \((n+1)\)-gon.

Set \(T_1 := \tilde{\sigma}^m(T)\) and \(s_1 := \tilde{\sigma}^m(s)\), a boundary leaf edge of \(T_1\). By construction \(T_1/s_1 = \sigma^m(T/s) = T'/s'\), and the isomorphism \(\Phi: F(T) \to F(T_1)\) induced by \(\tilde{\sigma}^m\) restricts on \(F(T/s)\) to the map induced by \(\sigma^m\). Hence
\[
\psi := \varphi \circ \Phi^{-1} : F(T_1) \to F(T')
\]
is an isomorphism of quivers with relations with \(\psi(s_1) = s'\) which induces the identity on \(F(T_1/s_1) = F(T'/s')\).

By Lemma~\ref{lem:reinsertion-unique}, \(T_1 = T'\). Therefore \(T' = \tilde{\sigma}^m(T)\) and \(\varphi = \Phi\), so \(\varphi\) is induced by the rotation \(\tilde{\sigma}^m\).
\end{proof}

\subsection{Proof of Theorem \ref{thm:main}}

\begin{proof}[Proof of Theorem \ref{thm:main}]
By Proposition~\ref{prop:F-HomExt},
\[
(Q^\chi,R^\chi) \cong F(T(\chi))
\]
and
\[
(Q^{\chi'},R^{\chi'}) \cong F(T(\chi')).
\]
Therefore
\[
(Q^\chi,R^\chi) \cong (Q^{\chi'},R^{\chi'})
\]
if and only if
\[
F(T(\chi))\cong F(T(\chi')).
\]
By Theorem~\ref{thm:comb}, this holds if and only if
\[
T(\chi')=\sigma^m(T(\chi)).
\]
\end{proof}

Combining Theorems \ref{thm: bij bw H-E and ITA} and \ref{thm:main}, we obtain the following. 

\begin{thm}\label{thm: bij bw ITA and trees up to rotation}
    Iterated tilted algebras in type $A_n$ up to isomorphism are in bijection with non-crossing spanning trees on a convex $n+1$-gon up to rotation, which are counted by the following formula (OEIS A296532) 

    \[a(n)=
    \begin{cases}
        \frac{\binom{3n}{n}}{(n+1)(2n+1)} & \text{if } n \text{ is even}\\ 
         & \\
        \frac{1}{n+1}\bigg(\frac{\binom{3n}{n}}{2n+1} + \left( {{3n-1\over 2} \atop {n-1\over 2}} \right) \bigg)& \text{if } n \text{ is odd.}
    \end{cases}\]
   \qed
\end{thm}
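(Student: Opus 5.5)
The plan has two parts: establish the bijection by composing earlier results, then count rotation classes with Burnside's lemma.

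\textbf{The bijection.} By Theorem \ref{thm: bij bw H-E and ITA}, isoclasses of iterated tilted algebras of $\Lambda$ are in bijection with isoclasses of Hom-Ext quivers of complete exceptional collections in mod-$\Lambda$. By Theorem \ref{thm: araya}, the map $\chi \mapsto T(\chi)$ is a bijection from complete exceptional collections to non-crossing spanning trees of the $(n+1)$-gon. By Theorem \ref{thm:main}, two collections have isomorphic Hom-Ext quivers exactly when their trees differ by a rotation. So Araya's bijection descends to a bijection between Hom-Ext isoclasses and rotation orbits of non-crossing spanning trees. Composing gives the first claim. Since every orientation of $A_n$ is derived equivalent to the linear one, "iterated tilted algebras in type $A_n$" means the same class for any orientation, so nothing depends on our choice.

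\textbf{The count.} I would apply Burnside's lemma to the cyclic group $C_{n+1}$ acting on the set $\mathcal{T}_{n+1}$ of non-crossing spanning trees of the $(n+1)$-gon. It is classical that $|\mathcal{T}_{n+1}| = \frac{1}{2n+1}\binom{3n}{n}$; this is the identity term, and dividing by $n+1$ gives the main summand in both cases.

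Next, I would show that only the identity and, when $n+1$ is even, the half-turn can fix a tree. Suppose a rotation $\rho \neq \mathrm{id}$ fixes a tree $T$. Then $\rho$ acts as an automorphism of the plane tree $T$ and fixes a unique center, either a vertex or an edge. It cannot fix a polygon vertex, so it fixes an edge and swaps its endpoints. That edge must then be a diameter and $\rho$ the half-turn, which forces $n+1$ to be even, i.e.\ $n$ odd. In that case the fixed chord must be a diameter, since a rotation of order at least $3$ fixes no chord. Hence for $n$ even the orbit count is exactly $\frac{1}{n+1}\cdot\frac{\binom{3n}{n}}{2n+1}$.

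For $n$ odd, I would count trees fixed by the half-turn. Such a tree contains one of the $(n+1)/2$ diameters $d$. Removing $d$ splits $T$ into two mutually determined halves: a non-crossing configuration on one side of $d$, consisting of $(n+1)/2 + 1$ vertices including both endpoints of $d$, that together with the symmetric copy and $d$ forms a spanning tree. Each half, together with $d$, must be a non-crossing spanning tree of the $\frac{n+3}{2}$-gon on that side; this condition is exactly what makes the whole configuration acyclic and connected. So the fixed count is
\[
\frac{n+1}{2}\cdot\frac{1}{2m+1}\binom{3m}{m}\cdot \frac{m}{\text{(fraction of such trees containing } d)}
\]
with $m = \frac{n+1}{2}$. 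More concretely, I would count non-crossing trees of an $(m+1)$-gon containing a fixed boundary edge via the standard generating function $T(x)$ satisfying $T = 1 + xT^3$ and extract the coefficient. The target is to show that the number of half-turn-fixed trees equals $\binom{(3n-1)/2}{(n-1)/2}$, so that Burnside gives $\frac{1}{n+1}\bigl(\frac{\binom{3n}{n}}{2n+1} + \binom{(3n-1)/2}{(n-1)/2}\bigr)$.

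\textbf{Main obstacle.} I expect the hardest step to be this symmetric count: getting the decomposition exactly right, without double counting over the choice of diameter, and then performing the Lagrange inversion. If it becomes messy, I would fall back on citing OEIS A296532 and the known enumeration of non-crossing trees up to rotation, checking small cases ($n=1,2,3$ give $1,1,4$, matching Figure \ref{fig:F-construction-A3}).
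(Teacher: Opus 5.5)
Your bijection argument is the paper's own: compose Theorem~\ref{thm: bij bw H-E and ITA} with Araya's bijection and Theorem~\ref{thm:main}. For the count, the paper gives no argument and simply cites OEIS A296532, whereas you derive the formula with Burnside's lemma over $C_{n+1}$. That route is more self-contained, and it explains the parity split. Your center-of-the-tree argument correctly shows that only the identity and, for $n$ odd, the half-turn can fix a tree. Together with the classical count $\frac{1}{2n+1}\binom{3n}{n}$, this already proves the even case.

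The odd case is not finished, and your displayed expression for the fixed count is wrong: it has a spurious factor $m$ and divides by the fraction instead of multiplying. With $m=\frac{n+1}{2}$, the correct count is $m\cdot b(m+1)$. Here $b(m+1)$ is the number of non-crossing spanning trees of the $(m+1)$-gon that contain a fixed boundary edge. There is no double counting over diameters, because distinct diameters cross, so a fixed tree contains exactly one. Deleting that boundary edge splits such a tree into non-crossing trees on two complementary arcs of consecutive vertices. Hence $b(m+1)=[x^{m-1}]T(x)^2$ with $T=1+xT^3$. Lagrange inversion, writing $T=1+U$ and $U=x(1+U)^3$, gives
\[
b(m+1)=\frac{2}{m-1}\binom{3m-2}{m-2}=\frac{1}{m}\binom{3m-2}{m-1} \qquad (m\ge 2).
\]
So the half-turn fixes $\binom{3m-2}{m-1}=\binom{(3n-1)/2}{(n-1)/2}$ trees, which also holds for $m=1$, and this is exactly the term you need.

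With that repair, your proof establishes the formula rather than citing it. The paper's citation buys only brevity.
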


\subsection{Formula for Exceptional Sequences in Type \(A_n\)}\label{subsec: counting E.S. in type A}

Combining Theorem~\ref{thm:main} with the linear extension formula, Theorem 3.4 in \cite{igusa2025hom}, we obtain an explicit formula for the number of complete exceptional sequences in mod-$\Lambda$, where $\Lambda$ is the path algebra of a quiver of type $A_n$.

For a complete exceptional collection $\chi$ in mod-$\Lambda$, let $\mathrm{LinExt}(Q^\chi, R^\chi)$ denote the number of linear extensions of the partial order $\leq_e$ on $(Q^\chi, R^\chi)$ defined in \cite[Definition 3.4]{igusa2025hom}, and let $\mathrm{Aut}(Q^\chi, R^\chi)$ denote the group of automorphisms of $(Q^\chi, R^\chi)$ as a quiver with relations. Let $e(\Lambda)$ denote the number of complete exceptional sequences in mod-$\Lambda$, which, by the results of \cite{obaid2013number}, is $e(\Lambda)=(n+1)^{n-1}$.

\begin{thm}\label{thm:expseq-count}
The number of complete exceptional sequences in mod-$\Lambda$ is
\[
e(\Lambda) = h(\Lambda) \sum_{[(Q,R)]} \frac{\mathrm{LinExt}(Q,R)}{|\mathrm{Aut}(Q,R)|},
\]
where the sum is over isomorphism classes of Hom--Ext quivers $(Q^\chi, R^\chi)$ of complete exceptional collections $\chi$ in mod-$\Lambda$, and $h(\Lambda) = n+1$ is the Coxeter number of $\Lambda$.
\end{thm}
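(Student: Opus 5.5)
We count complete exceptional sequences by first grouping them according to their underlying exceptional collection. Every complete exceptional sequence determines a unique complete exceptional collection $\chi$ by forgetting the order. Conversely, by Theorem 3.4 in \cite{igusa2025hom}, the orderings of $\chi$ that are exceptional sequences are exactly the linear extensions of the partial order $\leq_e$ on $(Q^\chi,R^\chi)$. Hence
\[
e(\Lambda)=\sum_{\chi}\mathrm{LinExt}(Q^\chi,R^\chi),
\]
where the sum runs over all complete exceptional collections. Since $\mathrm{LinExt}$ depends only on the isomorphism class of the quiver with relations, we regroup this sum by isoclass:
\[
e(\Lambda)=\sum_{[(Q,R)]} N_{[(Q,R)]}\,\mathrm{LinExt}(Q,R),
\]
with $N_{[(Q,R)]}$ the number of collections whose Hom--Ext quiver lies in that class. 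It then suffices to show that
\[
N_{[(Q,R)]} = \frac{n+1}{|\mathrm{Aut}(Q,R)|}.
\]

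To compute $N_{[(Q,R)]}$ we use Theorem \ref{thm: araya}, which identifies collections with non-crossing spanning trees of the $(n+1)$-gon. Theorem \ref{thm:main} then identifies each isoclass with a single orbit of the cyclic group $C_{n+1}=\langle\sigma\rangle$ acting on these trees by rotation. By orbit--stabilizer, $N_{[(Q,R)]}=(n+1)/|\mathrm{Stab}(T)|$ for any tree $T$ in the orbit.

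The main step is to show that $|\mathrm{Stab}(T)|=|\mathrm{Aut}(F(T))|$. Together with Proposition \ref{prop:F-HomExt}, which gives $(Q^\chi,R^\chi)\cong F(T(\chi))$, this yields $|\mathrm{Stab}(T)|=|\mathrm{Aut}(Q,R)|$. We define a homomorphism $\mathrm{Stab}(T)\to\mathrm{Aut}(F(T))$ by sending a rotation fixing $T$ to its induced permutation of the edges of $T$. This is an automorphism by the forward direction of Theorem \ref{thm:comb}.
\begin{itemize}
\item \emph{Surjectivity:} the strengthened statement in Theorem \ref{thm:comb} says every isomorphism $F(T)\to F(T)$ is induced by some rotation $\sigma^m$ with $\sigma^m(T)=T$.
\item \emph{Injectivity:} suppose a rotation fixes $T$ and fixes every edge of $T$. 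Since $T$ is spanning and $n\ge1$, some edge $\{a,b\}$ is fixed as a set. If the rotation is nontrivial, it must swap $a$ and $b$, so it is the half-turn. The half-turn cannot fix every edge of a spanning tree once $n\ge2$: a tree would need every edge to join antipodal vertices, and those edges are pairwise disjoint, so they cannot connect all the vertices. The case $n=1$ we check directly: there $F(T)$ is a single vertex, the stabilizer is all of $C_2$, and $N=1=2/2$ fails. So the small cases $n\le 2$ must be handled separately, either by direct verification or by adjusting conventions.
\end{itemize}

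Finally, substituting $N_{[(Q,R)]}=(n+1)/|\mathrm{Aut}(Q,R)|$ into the regrouped sum and identifying $n+1=h(\Lambda)$, the Coxeter number of type $A_n$, gives the formula.

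The obstacle I expect is precisely this identification $\mathrm{Stab}(T)\cong\mathrm{Aut}(F(T))$. In particular, the faithfulness of the action in degenerate small cases needs care, and I would handle $n\le2$ by direct check.
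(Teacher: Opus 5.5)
Your argument is the paper's argument. You sum Igusa et al.'s linear-extension count over collections, regroup into rotation orbits via Theorem~\ref{thm:comb}, apply orbit--stabilizer, and use $\mathrm{Stab}(T)\cong\mathrm{Aut}(F(T))$ from the strengthened clause of Theorem~\ref{thm:comb}; your half-turn injectivity check also proves the ``unique rotation'' that the paper asserts without proof.

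You are right that $n=1$ is genuinely exceptional, but it cannot be ``handled by direct check''. There the right-hand side is $2\cdot 1/1=2$ while $e(\Lambda)=1$, so the stated formula is false and $n=1$ has to be excluded. This matches Theorem~\ref{thm:comb} being proved only from the triangle upward. The case $n=2$ needs no separate treatment, since a triangle has no half-turn and your argument already covers it.
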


\begin{proof}
By \cite[Theorem 3.4]{igusa2025hom}, the complete exceptional sequences with underlying collection \(\chi\) are in bijection with the linear extensions of a well-defined partial order, \(\leq_e\), on \((Q^\chi, R^\chi)\). So
\[
e(\Lambda) = \sum_{\chi} \mathrm{LinExt}(Q^\chi, R^\chi),
\]
where the sum is over complete exceptional collections \(\chi\) in mod-$\Lambda$. By Araya's bijection \cite{araya2013exceptional}, this equals
\[
\sum_{T} \mathrm{LinExt}(Q^T, R^T),
\]
where the sum is over non-crossing spanning trees \(T\) of the \((n+1)\)-gon and \((Q^T, R^T) := F(T)\).

The cyclic rotation group \(\langle \sigma \rangle \cong \mathbb{Z}/(n+1)\mathbb{Z}\) acts on this set of trees and by Theorem~\ref{thm:comb}, two trees \(T, T'\) satisfy \(F(T) \cong F(T')\) if and only if they lie in the same orbit (differ by a cyclic rotation \(\sigma^m\)). Thus, the orbits are in bijection with isomorphism classes of \((Q^\chi, R^\chi)\). For a tree \(T\), the orbit-stabilizer theorem gives orbit size \((n+1)/|\mathrm{Stab}_{\langle\sigma\rangle}(T)|\). By the second clause of Theorem~\ref{thm:comb}, every automorphism of \(F(T)\) is induced by a unique rotation fixing \(T\). Hence \(\mathrm{Aut}(F(T)) \cong \mathrm{Stab}_{\langle\sigma\rangle}(T)\).

Grouping the sum by orbits, each orbit contributes
\[
\frac{n+1}{|\mathrm{Aut}(Q^T, R^T)|} \cdot \mathrm{LinExt}(Q^T, R^T),
\]
and summing over isomorphism classes gives the stated formula.
\end{proof}

\bibliographystyle{amsalpha}
\bibliography{bibliography}
\end{document}